\documentclass[11pt]{article}

\usepackage[margin=1in]{geometry}
\usepackage[T1]{fontenc}
\usepackage[utf8]{inputenc}
\usepackage{lmodern}
\usepackage{amsmath,amssymb,amsthm,mathtools}
\usepackage{microtype}
\usepackage{xcolor}
\usepackage{hyperref}
\usepackage[nameinlink,noabbrev]{cleveref}

\date{}

\theoremstyle{plain}
      \newtheorem{theorem}{Theorem}[section]
      \newtheorem{lemma}[theorem]{Lemma}
      \newtheorem{claim}[theorem]{Claim}

      \newtheorem{conjecture}[theorem]{Conjecture}
\theoremstyle{definition}

\theoremstyle{remark}

\title{A parity Erd\H{o}s--Hajnal theorem for \(t\)-intersecting curves}

\begin{document}

\author{Andrew Suk\thanks{Department of Mathematics, University of California at San Diego, La Jolla, CA, 92093 USA. Supported by NSF grant DMS-2246847. Email: {\tt asuk@ucsd.edu}.}  \and Su Zhou\thanks{Department of Mathematics, University of California at San Diego, La Jolla, CA, 92093 USA. Supported by NSF grant DMS-2246847. Email: {\tt suzhou@ucsd.edu}.} }

\maketitle

\begin{abstract}
For every fixed \(t\ge 1\), we prove a parity analogue of the mighty
Erd\H{o}s--Hajnal property for \(t\)-intersecting curves in the plane. Let
\(\mathcal B\) be a set of blue curves and \(\mathcal G\) a set of green
curves in the plane such that \(\mathcal B\cup\mathcal G\) is a collection
of \(t\)-intersecting curves in general position. We show that there exist
subfamilies \(\mathcal B'\subseteq\mathcal B\) and
\(\mathcal G'\subseteq\mathcal G\) such that
\[
|\mathcal B'|\geq \varepsilon|\mathcal B|
\qquad\text{and}\qquad
|\mathcal G'|\geq \varepsilon|\mathcal G|,
\]
where \(\varepsilon>0\) depends only on \(t\), such that either every pair in
\(\mathcal B'\times\mathcal G'\) intersects an even number of times or every
such pair intersects an odd number of times. For \(t=1\), this recovers the
theorem of Fox, Pach, and Suk for pseudo-segments. As an application, we show
that every \(n\)-vertex topological graph with edges forming a
\(t\)-intersecting family and with no \(k\) edges that pairwise cross an odd
number of times has at most \(n(\log n)^{O_t(\log k)}\) edges.
\end{abstract}

\section{Introduction}

Let \(\mathcal{C}\) be a finite family of simple Jordan arcs in the plane.
Throughout the paper, we assume that \(\mathcal{C}\) is in \emph{general
position}, that is, every two curves intersect in finitely many points, no two
curves are tangent, no three curves pass through the same point, and no
endpoint of one curve lies on another curve. In particular, every intersection
between two curves is a proper crossing. For an integer \(t\ge 1\), we say
that \(\mathcal{C}\) is \emph{\(t\)-intersecting} if every pair of curves in
\(\mathcal{C}\) intersect in at most \(t\) points. In the case \(t=1\), we
refer to \(\mathcal{C}\) as a collection of \emph{pseudo-segments}. The
\emph{intersection graph} of \(\mathcal C\) is the graph whose vertex set is
\(\mathcal C\), where two vertices are adjacent if and only if the
corresponding curves intersect. A graph is called a \emph{string graph} if it
arises as the intersection graph of curves in the plane.

The celebrated Erd\H{o}s--Hajnal conjecture states that every hereditary
graph class defined by forbidding a fixed induced subgraph contains a clique
or an independent set of polynomial size. We say that a hereditary graph class has the \emph{Erd\H{o}s--Hajnal
property} if there is a constant \(\delta>0\) such that every \(n\)-vertex
graph in the class contains a clique or an independent set of size at least
\(n^\delta\).  A breakthrough result of Tomon~\cite{Tomon} shows that the class of string graphs has this property.

One way to prove that a hereditary graph class has the
Erd\H{o}s--Hajnal property is to establish a stronger bipartite statement.
A hereditary graph class is said to have the \emph{strong
Erd\H{o}s--Hajnal property} if there exists a constant \(\delta>0\) such
that every graph \(G\) in the class contains two
disjoint subsets \(A,B\subseteq V(G)\), each of size at least
\(\delta|V(G)|\), such that the bipartite graph between \(A\) and \(B\) is
either complete or empty. It is known that string graphs do not have the
strong Erd\H{o}s--Hajnal property~\cite{Fox06,PT06}. On the other hand, Fox,
Pach, and T\'oth~\cite{FoxPachToth} proved that, for every fixed \(t\ge 1\),
intersection graphs of \(t\)-intersecting curves have the strong
Erd\H{o}s--Hajnal property.

For some applications, including density and strong regularity-type results, even this bipartite strengthening is not enough.  A
hereditary graph class is said to have the \emph{mighty
Erd\H{o}s--Hajnal property}\footnote{Fox, Pach, and Suk~\cite{FPS} define this property assuming
\(|A|=|B|\). We use the version without this assumption. For string graphs, the two versions are equivalent by a cloning argument.} if there
exists a constant \(\delta>0\) such that, for every graph \(G\) in the class
and every pair of disjoint subsets \(A,B\subseteq V(G)\), there exist
subsets \(A'\subseteq A\) and \(B'\subseteq B\) satisfying
\[
|A'|\ge \delta|A|
\qquad\text{and}\qquad
|B'|\ge \delta|B|,
\]
such that the bipartite graph between \(A'\) and \(B'\) is either complete
or empty.  Pach and
Solymosi~\cite{PachSolymosi} proved that intersection graphs of segments
in the plane have the mighty Erd\H{o}s--Hajnal property, and more recently,
Fox, Pach, and Suk~\cite{FPS} extended this to pseudo-segments via the following theorem.

\begin{theorem}[Fox--Pach--Suk~\cite{FPS}]
There exists a constant \(\varepsilon>0\) with the following property. Let
\(\mathcal B\) be a set of blue curves, and let \(\mathcal G\) be a set of
green curves in the plane such that \(\mathcal B\cup \mathcal G\) is a
collection of pseudo-segments in general position. Then there exist subsets
\(\mathcal B'\subseteq \mathcal B\) and \(\mathcal G'\subseteq \mathcal G\),
with
\[
|\mathcal B'|\ge \varepsilon|\mathcal B|
\qquad\text{and}\qquad
|\mathcal G'|\ge \varepsilon|\mathcal G|,
\]
such that either every curve in \(\mathcal B'\) intersects every curve in
\(\mathcal G'\), or every such pair is disjoint.
\end{theorem}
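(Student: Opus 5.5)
We have to prove the Fox--Pach--Suk theorem for pseudo-segments. The plan is to combine a sparse/dense dichotomy with a geometric partition.

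\textbf{Reduction and setup.} Write $m=|\mathcal B|$ and $n=|\mathcal G|$. By a cloning argument it suffices to treat the case where $m$ and $n$ are comparable; alternatively, we keep them separate and argue with densities throughout. Let $p$ be the density of crossing pairs in $\mathcal B\times\mathcal G$. We fix a small absolute constant $c$ and split into two cases.

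\textbf{Sparse case, $p<c$.} Here we aim for an empty bipartite pair. First we discard curves of large degree, so that most blue curves meet at most $2cn$ green curves. We then use the separator theorem for string graphs (Lee; Fox--Pach), which says that an intersection graph with $E$ edges has a balanced separator of $O(\sqrt{E})$ vertices. The natural way to use it is on the arrangement: take the planarization whose vertices are the crossings and endpoints. Its size is $O(m+n+pmn)$, which is too large in general, so we proceed differently. We take a random sample of $r$ curves and form a $(1/r)$-cutting of the arrangement into $O(r^2)$ cells. For pseudo-segments, a cutting of this size exists via vertical decomposition, because pseudo-segments have linear union complexity and admit trapezoidal decomposition after an appropriate topological transformation. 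Each cell is crossed by few curves. By averaging we then find two far-apart groups of cells, one containing many blue curves and the other many green curves, separated by a curve-free region. Curves lying entirely in disjoint regions do not cross.

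This step needs curves to be \emph{short} relative to the cutting. A cleaner route is the known lemma that a sparse intersection graph of pseudo-segments (indeed, of any curves) with bounded edge density contains linear-sized bi-cliques of non-edges. This follows from the separator theorem applied iteratively (Fox--Pach, ``string graphs and incomparability graphs''), and I would invoke it directly. We separate, recurse on the larger side, and extract $\Omega(m)$ blue and $\Omega(n)$ green curves lying in components with no blue--green edges.

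\textbf{Dense case, $p\ge c$.} Here we aim for a complete bipartite pair, and this is the main obstacle. For segments, Pach--Solymosi use the fact that crossings are determined by sidedness of endpoints with respect to lines, i.e.\ semialgebraic structure. For pseudo-segments I would first extend the green curves into a family of bi-infinite pseudolines, using the Levi enlargement lemma (in the version for pseudo-segments of Chan/Agarwal--Sharir). This keeps the pseudo-segment property, and a blue curve $\beta$ crosses a green curve $\gamma$ iff the endpoints of $\beta$ lie on opposite sides of the extension $\hat\gamma$ \emph{and} the endpoints of $\gamma$ lie on opposite sides of the extension $\hat\beta$. Each of these two conditions is a ``point vs pseudoline'' sidedness relation. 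Such relations have bounded VC-dimension and admit cuttings, so we may apply a density-increment or regularity argument for point–pseudoline incidence-type relations (via simplicial partitions for pseudolines, Matoušek style). This yields $\Omega(m)$ blue and $\Omega(n)$ green curves on which each sidedness condition is constant. Because the crossing density is at least $c$, the constant pattern must be ``crossing'' for at least one surviving pair, and hence for all pairs. The delicate points are, first, the simultaneous enlargement (extending both colors to pseudolines consistently, which may require applying the enlargement to the whole family $\mathcal B\cup\mathcal G$), and second, the existence of partition theorems with constant-size parts in the abstract pseudoline setting. I expect the latter to be the hardest step. Our first attempt would be the cutting lemma for pseudolines of bounded VC dimension, iterated twice, once for each endpoint of the blue curves, and then once more with the roles of the colors swapped.
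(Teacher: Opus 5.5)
Both cases of your plan have a genuine gap.

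\textbf{Sparse case.} The separator lemma you invoke needs the whole intersection graph of $\mathcal B\cup\mathcal G$ to be sparse, but your hypothesis controls only the blue--green density $p$. The blue curves may pairwise cross, and so may the green ones. So that graph can have $\Theta((m+n)^2)$ edges while $p$ is tiny, and the separator then has linear size and yields nothing. Your parenthetical ``indeed, of any curves'' is false in this bipartite setting. Every bipartite graph is the blue--green intersection pattern of some curves: let the green curves be tiny disjoint arcs and route each blue curve through the arcs of its neighbours. A random bipartite graph of density $c$ has no two linear-size sets spanning no edge. So the pseudo-segment hypothesis must enter quantitatively. The paper does this by observing that the planarized arrangement has only $O((m+n)^2)$ vertices and taking an $r$-division with $r=\delta(m+n)^2$ (Theorems~\ref{thm:mixed} and~\ref{thm:red-blue-region}). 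Its regions have only $O(\sqrt\delta(m+n))$ boundary vertices, so a region holding a linear fraction of the blue endpoints is met by only a small fraction of the green curves.

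\textbf{Dense case.} Your characterization of crossing requires a common pseudoline extension of all of $\mathcal B\cup\mathcal G$, since it uses both $\hat\beta$ and $\hat\gamma$, and such an extension need not exist. A pseudo-segment can end inside the bounded face enclosed by three pairwise crossing pseudo-segments that it already crosses once each. Any extension beyond that endpoint must then recross one of them. The standard way around this is to cut the curves into more pieces, which destroys the whole-curve crossing relation you need. Separately, the density $p\ge c$ is not inherited by the subfamilies, so you cannot conclude that the homogeneous pattern is the crossing one. Either outcome would suffice, so the dichotomy is unnecessary.

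\textbf{The paper's route.} The paper obtains the statement as the case $t=1$ of Theorem~\ref{main}, since two pseudo-segments cross $0$ or $1$ times. It replaces your global extension by local grounding. Repeated use of the $r$-division yields linear subfamilies whose first and second endpoints lie in four pairwise disjoint domains, with blue curves avoiding the green domains and vice versa (Theorem~\ref{thm:doublegrounded}). Joining endpoints inside these domains turns two blue and two green curves into two closed curves, which cross an even number of times. Hence $\chi(\beta,\gamma)\equiv f_1(\beta)+f_2(\gamma)\pmod 2$, and pigeonhole finishes without any sparse/dense split.
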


It is not known whether intersection graphs of \(t\)-intersecting curves
satisfy the mighty Erd\H{o}s--Hajnal property for \(t\ge 2\). Kor\'andi,
Pach, and Tomon~\cite{KPT} conjectured that they do and proved a partial
result in a double-grounded setting. More recently, Hunter, Milojevi\'c,
Sudakov, and Tomon~\cite{HMT} proved that if the bipartite intersection graph
between two \(n\)-element \(t\)-intersecting families has positive density,
then it contains a linear-size complete bipartite graph. A similar density
result was also obtained by Fox, Pach, and T\'oth~\cite{FoxPachToth}.

Our main result establishes a parity analogue of the mighty Erd\H{o}s--Hajnal
property for \(t\)-intersecting curves. For \(t=1\), this recovers the theorem of Fox, Pach, and Suk for
pseudo-segments. 

\begin{theorem}\label{main}
For every integer \(t \ge 1\), there exists a constant \(\varepsilon>0\) with
the following property. Let \(\mathcal B\) be a set of blue curves, and let
\(\mathcal G\) be a set of green curves in the plane such that
\(\mathcal B\cup \mathcal G\) is a collection of \(t\)-intersecting curves in
general position. Then there exist subsets
\(\mathcal B'\subseteq \mathcal B\) and
\(\mathcal G'\subseteq \mathcal G\), with
\[
|\mathcal B'|\ge \varepsilon|\mathcal B|
\qquad\text{and}\qquad
|\mathcal G'|\ge \varepsilon|\mathcal G|,
\]
such that either every curve in \(\mathcal B'\) intersects every curve in
\(\mathcal G'\) an odd number of times, or every such pair intersects an even
number of times.
\end{theorem}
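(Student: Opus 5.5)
Write \(\mathrm{par}(\beta,\gamma)\in\{0,1\}\) for the parity of \(|\beta\cap\gamma|\). The plan is to turn parity into a semi-algebraic-type relation of bounded complexity and then run a divide-and-conquer (cutting) argument in the spirit of Fox--Pach--Suk.

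\textbf{Encoding parity topologically.} The key fact is that for a closed curve \(\gamma\) and an arc \(\beta\) in general position, \(\mathrm{par}(\beta,\gamma)\) depends only on whether the two endpoints of \(\beta\) lie on the same side of \(\gamma\), via Jordan and two-colouring of the faces. Our curves are arcs, not closed curves. So first close each green arc \(\gamma\) into a closed curve \(\hat\gamma\): append a path from one endpoint of \(\gamma\) to a point at infinity, and likewise from the other endpoint. Concretely, shoot from each endpoint a ray in a fixed direction, chosen generically. Then
\[
\mathrm{par}(\beta,\gamma)=\mathrm{par}(\beta,\hat\gamma)+\mathrm{par}(\beta,r_1(\gamma))+\mathrm{par}(\beta,r_2(\gamma)) \pmod 2,
\]
and \(\mathrm{par}(\beta,\hat\gamma)\) is the "side" relation between the endpoints of \(\beta\) and the two-coloured plane determined by \(\hat\gamma\). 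By pigeonhole, fixing the value of each of the three summands on a constant fraction of pairs costs only constants. It therefore suffices to get homogeneous large subfamilies for three simpler relations:
\begin{enumerate}
\item a point-versus-region relation, whether a point lies in the odd region of \(\hat\gamma\), applied to the endpoints of \(\beta\);
\item a ray-versus-arc parity relation;
\item its symmetric counterpart.
\end{enumerate}
We then intersect these by iterating. Iteration is allowed because the mighty property for each relation applies to arbitrary subfamilies, so we apply it three or four times successively and lose only a constant factor each time.

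\textbf{Handling each relation.} For the point-in-region relation, the regions \(\hat\gamma\) come from \(t\)-intersecting curves. The rays are \(0\)-intersecting with each other and cross arcs a bounded number of times only after a suitable perturbation; this is a delicate point, since a ray may cross \(\beta\) many times. To handle this I would replace rays by a segment-free construction: pick a huge disk \(D\) containing everything and connect each endpoint to \(\partial D\) along a path chosen within the arrangement of \(\mathcal B\). Alternatively, and more robustly, I would apply a cutting lemma for \(t\)-intersecting families. Take a random sample of \(r\) curves from \(\mathcal B\cup\mathcal G\) and form the vertical decomposition of their arrangement into \(O_t(r^2)\) cells, each crossed by at most \(O((n/r)\log r)\) curves. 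Two endpoints in a common cell can be joined inside the cell with few crossings, and parity comparisons reduce to parities between a curve and short connecting paths. Using a density-increment argument as in Fox--Pach--Suk, we find a cell (or pair of cells) that contains a constant fraction of the endpoints of \(\mathcal B\) and of \(\mathcal G\) yet is crossed by only a small fraction of the curves. Relative to that cell, the parity of most pairs is determined by which "side" data the curves carry, a bounded number of bits since each crossing pattern has complexity \(O(t)\). Pigeonhole on these bits then gives \(\varepsilon=\varepsilon(t)\).

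\textbf{Main obstacle.} The hard step is the density increment, which has to work for arbitrary sizes \(|\mathcal B|\ne|\mathcal G|\). One must guarantee that after localizing to a cell, a constant fraction of each family survives and the crossing structure really reduces to bounded information. I would first attempt it via a weighted cutting, weighting blue curves by \(1/|\mathcal B|\) and green curves by \(1/|\mathcal G|\), combined with the \(t\)-intersecting separator/cutting bounds with constant \(r=r(t)\). The fallback is an induction on \(t\), cutting curves at crossings to reduce the number of intersections per pair, with the parity decomposition formula used to recombine the pieces.
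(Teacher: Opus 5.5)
There is a genuine gap. Neither of your two routes is carried out, and as stated neither can be.

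\textbf{Why the two routes fail.} Your ray-closing identity is correct, but it splits a relation that the $t$-intersecting hypothesis controls into summands that it does not control.
\begin{itemize}
\item A generic ray can cross a Jordan arc arbitrarily often, and no small perturbation changes that. So $\mathrm{par}(\beta,r_i(\gamma))$ is a parity relation between families with no bound on the number of crossings, and the curves $\hat\gamma$ are not $t$-intersecting either.
\item For instance, if every green curve is tiny, then $\mathrm{par}(\beta,\gamma)=0$ for every pair. Yet $\mathrm{par}(\beta,r_1(\gamma))$ is then the parity relation between the blue arcs and a family of parallel rays that may cross them unboundedly. That is a problem of the same type as the original, with the crucial hypothesis removed.
\item So you have reduced the theorem to harder statements, and you give no argument that any summand has the mighty property.
\item The cutting alternative fails for a different reason. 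For arbitrary Jordan arcs, the vertical decomposition does not consist of cells determined by $O(1)$ curves, since an arc can have unboundedly many $x$-extremal points. Hence there is no $O_t(r^2)$-cell cutting with $O((n/r)\log r)$ crossings per cell.
\item The density increment, which would carry all the weight, is left open.
\end{itemize}
The paper avoids geometry entirely. It planarizes the whole arrangement, which has $O(t(m+n)^2)$ vertices exactly because of the $t$-intersecting hypothesis, and triangulates it. It then applies an $r$-division (Theorem~\ref{thm:mixed}) with an extra balancing step for a marked set $Q$ containing one point on each green curve. This produces a domain that:
\begin{itemize}
\item contains an $\Omega(\delta/t)$-fraction of the chosen blue endpoints;
\item meets only $O(\sqrt\delta(m+n))$ green curves;
\item has few boundary crossings and at most $24$ complementary components.
\end{itemize}

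\textbf{The missing idea.} The step that makes parity tractable is absent from your sketch. A single cell containing many endpoints of both colours and crossed by few curves is not enough: paths joining endpoints inside that cell would cross the pieces of other-colour curves that end there. What is needed is the double-grounded configuration of Theorem~\ref{thm:doublegrounded}:
\begin{itemize}
\item connected domains $\Delta_{b_1},\Delta_{b_2}$ containing the first and second endpoints of linearly many blue curves and avoided by linearly many green curves;
\item domains $\Delta_{g_1},\Delta_{g_2}$ with the roles of the colours reversed;
\item the two pairs of domains disjoint.
\end{itemize}
This is obtained by at most four applications of the region theorem, passing to a complementary component in between. Then, for $\beta,\beta'$ and $\gamma,\gamma'$, close $\beta\cup\beta'$ through $\Delta_{b_1},\Delta_{b_2}$ and $\gamma\cup\gamma'$ through $\Delta_{g_1},\Delta_{g_2}$. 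Two closed curves cross an even number of times. Writing $\chi$ for the number of crossings, it follows that $\chi(\beta,\gamma)+\chi(\beta',\gamma)+\chi(\beta,\gamma')+\chi(\beta',\gamma')$ is even. Hence $\chi(\beta,\gamma)\equiv f_1(\beta)+f_2(\gamma)\pmod 2$, and pigeonhole finishes.

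The unequal-sizes issue you single out as the main obstacle is in fact easy. The paper replaces each blue curve by $\lfloor n/m\rfloor$ or $\lceil n/m\rceil$ nearby parallel copies and applies the balanced case.
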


Fox, Pach, and Suk~\cite[Theorem~2.2]{FPS} gave several equivalent
formulations of the mighty Erd\H{o}s--Hajnal property. One of them, together
with Theorem~\ref{main}, yields the following strengthening of
Szemer\'edi's regularity lemma for \(t\)-intersecting curves.

\begin{theorem} For every integer \(t\ge 1\) and every \(\varepsilon>0\), there exists \(K=K(t,\varepsilon)\) such that the following holds. Let \(\mathcal{C}\) be a finite collection of \(t\)-intersecting curves in the plane in general position. Then there is an equipartition of \(\mathcal{C}\) into \(K\) parts \(\mathcal{C}_1,\dots,\mathcal{C}_K\) such that, for all but at most \(\varepsilon K^2\) pairs \(\mathcal{C}_i,\mathcal{C}_j\) of parts, either every curve in \(\mathcal{C}_i\) intersects every curve in \(\mathcal{C}_j\) an even number of times, or every curve in \(\mathcal{C}_i\) intersects every curve in \(\mathcal{C}_j\) an odd number of times. \end{theorem}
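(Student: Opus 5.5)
The plan is to derive this regularity statement from Theorem~\ref{main} by the standard route of Fox, Pach, and Suk~\cite[Theorem~2.2]{FPS}: a mighty Erd\H{o}s--Hajnal property for a hereditary bipartite relation yields a strong (homogeneous) regularity lemma. Define a graph \(H\) on \(\mathcal C\) in which \(c\sim c'\) iff \(c\) and \(c'\) cross an odd number of times. Two facts are needed. First, the class of all such ``odd-crossing graphs'' of \(t\)-intersecting families in general position is hereditary, since subfamilies are again \(t\)-intersecting and in general position. Second, by Theorem~\ref{main}, applied with arbitrary disjoint \(A,B\subseteq\mathcal C\) coloured blue and green, there are \(A'\subseteq A\), \(B'\subseteq B\) with \(|A'|\ge\varepsilon_0|A|\), \(|B'|\ge\varepsilon_0|B|\) such that \(A'\times B'\) is complete or empty in \(H\). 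So \(H\)'s class has the mighty Erd\H{o}s--Hajnal property with \(\delta=\varepsilon_0(t)\), and the cited equivalence already gives the conclusion. For completeness, here is how the argument goes.

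\textbf{Step 1 (density increment in energy).} Start from an arbitrary equipartition into \(K_0\) parts. For a partition \(\mathcal P\), call a pair of parts bad if it is not homogeneous. While more than \(\varepsilon K^2\) pairs are bad, refine. For each bad pair \((\mathcal C_i,\mathcal C_j)\), apply Theorem~\ref{main} to obtain homogeneous \(\mathcal C_i'\subseteq\mathcal C_i\), \(\mathcal C_j'\subseteq\mathcal C_j\) of relative size at least \(\varepsilon_0\). Iterating Theorem~\ref{main} inside the complements, we can cover all but an \(\eta\)-fraction of \(\mathcal C_i\times\mathcal C_j\) by homogeneous rectangles. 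The number of rectangles needed is bounded in terms of \(\varepsilon_0\) and \(\eta\). This is the standard ``mighty EH implies an \(\eta\)-almost homogeneous partition'' lemma. Taking the common refinement of all these pieces, the proportion of vertex pairs lying in non-homogeneous rectangles drops by a constant factor depending on \(\varepsilon_0\).

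\textbf{Step 2 (iterate and equalize).} The number of non-homogeneous pairs decreases geometrically, so after \(O_{\varepsilon,\varepsilon_0}(1)\) rounds the fraction of pairs \((x,y)\) lying in non-homogeneous part-pairs is below \(\varepsilon/2\). The resulting partition has a bounded but possibly unequal number of parts. Finally, chop each part into pieces of a common size \(m\), throw the leftovers into a small exceptional set, and redistribute that set. This produces an equipartition into \(K\) parts. Homogeneity is inherited by subsets, so every new pair not touching leftovers or non-homogeneous old pairs is still homogeneous, and the number of bad pairs is at most \(\varepsilon K^2\).

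The main obstacle is the equalization in Step 2. Leftover pieces can spoil homogeneity, so the exceptional mass must be kept below \(\varepsilon/4\). This is handled by choosing \(m\) much smaller than the smallest part, relative to \(\varepsilon\) divided by the number of parts. Since this is exactly the content of \cite[Theorem~2.2]{FPS}, in the paper I would simply verify the hypotheses (hereditary, mighty EH with parameter \(\varepsilon_0(t)\)) and cite that result.
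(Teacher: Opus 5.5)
Your proposal is correct and follows the paper's route: the paper also obtains this theorem by noting that Theorem~\ref{main} gives the hereditary class of odd-crossing graphs of \(t\)-intersecting families the mighty Erd\H{o}s--Hajnal property (in the unequal-size form, which implies the equal-size form used in \cite{FPS}), and then invoking \cite[Theorem~2.2]{FPS}. Your supplementary sketch of that equivalence is the standard one, with one detail to add: take the initial number of parts \(K_0\) to be at least a large multiple of \(1/\varepsilon\), because the refinement never treats pairs lying inside a single original part, and these must carry negligible mass.
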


\noindent A standard application of a weak bipartite regularity lemma of
Koml\'os and S\'os~\cite[Theorem~9.4.1]{Matousek}, together with
Theorem~\ref{main}, yields the following density result, which will be used in Section~\ref{sec:top}.

\begin{theorem}\label{thm:dense}
For every integer \(t \ge 1\), there exists a constant \(c=c(t)>0\) with the
following property. Let \(\mathcal B\) be a set of blue curves, and let
\(\mathcal G\) be a set of green curves in the plane such that
\(\mathcal B\cup \mathcal G\) is a collection of \(t\)-intersecting curves in
general position. Suppose that at least $\delta |\mathcal B||\mathcal G|$ pairs \((\beta,\gamma)\in \mathcal B\times \mathcal G\) intersect an odd
number of times. Then there exist subsets
\(\mathcal B'\subseteq \mathcal B\) and
\(\mathcal G'\subseteq \mathcal G\), with
\[
|\mathcal B'|\ge \delta^c |\mathcal B|
\qquad\text{and}\qquad
|\mathcal G'|\ge \delta^c |\mathcal G|,
\]
such that every curve in \(\mathcal B'\) intersects every curve in
\(\mathcal G'\) an odd number of times.
\end{theorem}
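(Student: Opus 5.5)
We derive this from Theorem~\ref{main} via the weak bipartite regularity lemma of Koml\'os and S\'os, as indicated before the statement. Write \(n=|\mathcal B|\), \(m=|\mathcal G|\), and let \(H\) be the bipartite ``odd'' graph on \(\mathcal B\times\mathcal G\), in which \(\beta\gamma\) is an edge when \(\beta,\gamma\) cross an odd number of times; by hypothesis \(H\) has density at least \(\delta\). The key features are that the class of \(t\)-intersecting families is hereditary, and Theorem~\ref{main} applies to any pair of subfamilies, so it gives homogeneous pairs in every sub-configuration. The parity version of the mighty Erd\H{o}s--Hajnal property for \(H\) is exactly what Theorem~\ref{main} provides.

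The cleanest route to polynomial dependence is the Fox--Pach--Suk style iteration, not a tower-type regularity bound. Apply Theorem~\ref{main} to \((\mathcal B,\mathcal G)\) to get a homogeneous pair \((\mathcal B_1,\mathcal G_1)\) of relative sizes \(\varepsilon\), either all-odd or all-even. If it is all-odd, we are done. If it is all-even, we have found a piece of relative size \(\varepsilon^2\) of the grid \(\mathcal B\times\mathcal G\) that carries no edges of \(H\). We then use a density-increment or partitioning step: repeatedly apply Theorem~\ref{main} to the remaining parts, so that \(\mathcal B\times\mathcal G\) is covered, up to a small fraction, by homogeneous rectangles. This is the standard statement (Fox--Pach--Suk, Theorem~2.2 equivalences) that the mighty property yields a partition of \(\mathcal B\) and \(\mathcal G\) into \(K\) parts with all but an \(\eta\)-fraction of the pairs homogeneous. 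Since \(H\) has density \(\delta\), pick \(\eta=\delta/2\). Some homogeneous pair must then be all-odd, because otherwise the edges would lie only in the non-homogeneous pairs, giving density at most \(\delta/2\). Each part has size at least roughly \(n/K\) (respectively \(m/K\)).

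The main obstacle is quantitative: we need \(K\le\delta^{-c}\) rather than a tower in \(1/\delta\). I would attack this with the Koml\'os--S\'os lemma in the form Matou\v{s}ek states it. For a bipartite graph whose class has the mighty EH property with constant \(\varepsilon\), it gives a partition into \(K=\eta^{-O(\log(1/\varepsilon))}\)-many parts, so the count is polynomial in \(1/\eta\). The argument behind this is a greedy one: recursively split a non-homogeneous cell using Theorem~\ref{main}, losing a constant fraction each time, and track the mass of inhomogeneous pairs, which shrinks geometrically by a factor depending on \(\varepsilon\). After \(O(\log(1/\delta))\) rounds the mass drops below \(\delta/2\). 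The cells then have size at least \(\varepsilon^{O(\log(1/\delta))}=\delta^{O(\log(1/\varepsilon))}\), and we take \(c=O(\log(1/\varepsilon(t)))\).

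The delicate point to check is unbalanced sizes \(n\neq m\). The footnote's cloning remark, together with the version of Theorem~\ref{main} that allows \(|\mathcal B|\ne|\mathcal G|\), handles this: all relative sizes are measured separately in \(\mathcal B\) and \(\mathcal G\), and the inhomogeneous mass is measured in the product measure. With these choices, picking an all-odd cell yields the required \(\mathcal B',\mathcal G'\) with \(|\mathcal B'|\ge\delta^c|\mathcal B|\) and \(|\mathcal G'|\ge\delta^c|\mathcal G|\).
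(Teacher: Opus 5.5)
Your argument is sound in substance, but it takes a different route from the paper's, and you describe the tools you cite inaccurately.

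\textbf{The paper's route.} The intended proof uses the Koml\'os--S\'os lemma (Matou\v{s}ek, Theorem~9.4.1) as a \emph{weak} regularity statement. Apply it to the odd graph \(H\) with the parameter \(\varepsilon=\varepsilon(t)\) of Theorem~\ref{main}. It produces a single pair \(\mathcal Y_1\subseteq\mathcal B\), \(\mathcal Y_2\subseteq\mathcal G\) of relative sizes \(\delta^{O(\varepsilon^{-2})}\), such that any subfamilies \(\mathcal Z_i\subseteq\mathcal Y_i\) with \(|\mathcal Z_i|\ge\varepsilon|\mathcal Y_i|\) still have odd-density at least \(\delta/2\). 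One application of Theorem~\ref{main} to \((\mathcal Y_1,\mathcal Y_2)\) then gives a homogeneous pair that cannot be all-even, so it is all-odd.

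\textbf{What your framing gets wrong.} The Koml\'os--S\'os lemma says nothing about homogeneous pairs, and it does not give a partition into \(\eta^{-O(\log(1/\varepsilon))}\) parts. Your greedy splitting does not give a partition of \(\mathcal B\) and \(\mathcal G\) into polynomially many parts either, because refining both sides after each split would blow up the number of parts.

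\textbf{Why your argument still works.} Read as a recursion on combinatorial rectangles, your greedy step is a complete proof. Replace an undecided \(X\times Y\) by the homogeneous \(X'\times Y'\) from Theorem~\ref{main}, which applies to arbitrary subfamilies of unequal sizes, together with the three complementary rectangles. You need to make two points explicit.
\begin{enumerate}
\item Each split is only guaranteed to remove an \(\varepsilon^2\)-fraction of the undecided mass. So you need \(R=\lceil\varepsilon^{-2}\ln(2/\delta)\rceil\) rounds, and \(c\) is of order \(\varepsilon^{-2}\) (up to a \(\log(1/\varepsilon)\) factor), not \(O(\log(1/\varepsilon))\).
\item The all-odd rectangle must not be thin. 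One fix is to trim \(X',Y'\) to exactly \(\lceil\varepsilon|X|\rceil\) and \(\lceil\varepsilon|Y|\rceil\) elements (with \(\varepsilon\le 1/4\)), so every nonempty piece keeps an \(\varepsilon\)-fraction of each side. Alternatively, note there are at most \(4^R\) rectangles, so those with a side of relative size below \(\delta 4^{-R}/8\) carry mass less than \(\delta/4\).
\end{enumerate}

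\textbf{Comparison.} Your route avoids the regularity lemma and uses only the heredity of Theorem~\ref{main}. It also handles the odd and even versions at once, and shows that almost all of \(\mathcal B\times\mathcal G\) is covered by large homogeneous rectangles. The paper's route is shorter, since it uses Theorem~\ref{main} only once.

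\textbf{A caveat on the statement itself.} Both routes give sizes \(\Omega_t(\delta^{c'})\), which is at least \(\delta^{c}\) only for \(\delta\) bounded away from \(1\). This restriction cannot be removed. Take \(N\) pairwise crossing bundles of parallel segments, each with equally many blue and green members. The odd-density is \(1-1/N\), yet every all-odd pair satisfies \(|\mathcal B'|/|\mathcal B|+|\mathcal G'|/|\mathcal G|\le 1\).
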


\noindent Likewise, an analogue of Theorem~\ref{thm:dense} holds for even crossings.

\medskip

\noindent \textbf{Quasi-planar graphs and odd crossings.}  A \emph{topological graph} is a graph drawn in the plane with points as vertices and simple Jordan arcs as edges connecting the corresponding endpoints. All topological graphs considered in this paper have no loops or multiple edges.  The edges are allowed to cross, but may not pass through vertices other than their endpoints. We always assume the drawing is in general position: no two edges are tangent,
no three edges pass through the same interior point, and every intersection
between the interiors of two edges is a proper crossing.  We say that the edges of a topological graph are \emph{\(t\)-intersecting} if no two edges cross more than \(t\) times.

Euler's formula implies that every planar graph on \(n\) vertices has at most
\(3n-6\) edges. A longstanding open problem in topological graph theory asks
whether a similar linear bound holds under the much weaker assumption that no
\(k\) edges pairwise cross. A topological graph is called
\emph{\(k\)-quasi-planar} if it contains no \(k\) pairwise crossing edges.

\begin{conjecture}
For every integer \(k\ge 2\), every \(k\)-quasi-planar topological graph on
\(n\) vertices has \(O_k(n)\) edges.
\end{conjecture}

Pach, Radoi\v{c}i\'c, and T\'oth~\cite{PRT04} proved the conjecture for
\(k=3\), and Ackerman and Tardos~\cite{AcT07} later improved the constant.
Ackerman~\cite{Ac09} proved the conjecture for \(k=4\). The conjecture remains open for
every \(k\ge 5\). For general \(k\), the best known bound is due to Fox,
Pach, and Suk~\cite{FPS2}, who proved that every \(k\)-quasi-planar
topological graph on \(n\) vertices has at most
\(n(\log n)^{O(\log k)}\) edges.

By the celebrated Hanani--Tutte theorem (see~\cite{ThomassenHT}), if an $n$-vertex topological graph has no two edges cross an odd number of times, then the graph is planar and has \(O(n)\) edges. In~\cite{PTdisjoint}, Pach and T\'oth established the following parity analogue of the quasi-planar graph problem.

\begin{theorem}[Pach--T\'oth~\cite{PTdisjoint}]
For every \(k \ge 2\), there exists a constant \(C=C(k)\) such that the following holds. If \(G\) is an \(n\)-vertex topological graph with no \(k\)  edges that pairwise cross an odd number of times, then
\[
|E(G)| \le Cn(\log n)^{5k-10}.
\]
\end{theorem}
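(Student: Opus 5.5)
We plan to argue by induction on \(k\), using a divide-and-conquer scheme driven by a separator bound whose input is the number of oddly crossing pairs of edges. The base case \(k=2\) is the Hanani--Tutte theorem: if no two edges cross an odd number of times, then \(G\) is planar and \(|E(G)|\le 3n-6\). For the inductive step, let \(f_k(n)\) denote the maximum number of edges in an \(n\)-vertex topological graph with no \(k\) edges pairwise crossing an odd number of times. We assume \(f_{k-1}(n)\le C_{k-1}n(\log n)^{5(k-1)-10}\).

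\textbf{Step 1: counting odd crossings.} Fix a graph \(G\) with \(m\) edges and no \(k\) pairwise odd-crossing edges, and fix an edge \(e\). The edges that cross \(e\) an odd number of times form a topological subgraph on at most \(n\) vertices. This subgraph contains no \(k-1\) pairwise odd-crossing edges, since such edges together with \(e\) would give \(k\). Hence \(e\) is oddly crossed by at most \(f_{k-1}(n)\) edges. Summing over all edges, the number \(\mathrm{ocp}(G)\) of unordered oddly crossing pairs satisfies
\[
\mathrm{ocp}(G)\le m\,f_{k-1}(n).
\]

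\textbf{Step 2: a parity separator.} We next need a bisection-width bound of the form
\[
b(G)\le c\log n\,\sqrt{\mathrm{ocp}(G)+\textstyle\sum_v d(v)^2}.
\]
This is the step we expect to be the main obstacle. Our first attempt is to delete a set \(F\) of edges that meets every oddly crossing pair, take \(|F|\le \mathrm{ocp}(G)\), and apply Hanani--Tutte to \(G-F\), which is then planar. We would then use the Lipton--Tarjan separator (in its weighted, degree-sensitive form) on \(G-F\), and charge the edges of \(F\) back to the cut. As stated, this loses too much, because \(|F|\) enters linearly rather than under a square root. We would therefore first replace the planar-separator step by the Pach--Shahrokhi--Szegedy type estimate \(b(G)\lesssim \sqrt{\mathrm{cr}+\sum d^2}\). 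We would then transfer it to the parity setting through the strong Hanani--Tutte theorem, redrawing \(G\) so that only pairs that originally crossed oddly cross at all, which makes \(\mathrm{ocp}\) play the role of the crossing number. The \(\sum_v d(v)^2\) term is handled by the usual degree-splitting trick, which costs at most a factor \(\log n\).

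\textbf{Step 3: the recursion.} We recursively bisect \(G\) into pieces with at most \(2n/3\) vertices. At each level we delete the cut edges, whose number is at most about \(\log n\sqrt{m\,f_{k-1}(n)}\), and we stop once the pieces are small. The total deletion over all \(O(\log n)\) levels is dominated by the top level. If \(m\ge C_k n(\log n)^{5k-10}\), the deleted edges form only a small fraction of \(m\), so the surviving pieces must violate the bound on smaller graphs, which is a contradiction. Solving \(m\lesssim \log n\sqrt{m\,n(\log n)^{5k-15}}\cdot(\log n)^{O(1)}\) shows that the exponent of \(\log n\) increases by a fixed amount at each step of the induction. Tracking the constants carefully, with squaring introducing a factor \(2\) and the separator and recursion contributing further logarithms, should give exactly the increment \(5\), and hence the stated bound \(Cn(\log n)^{5k-10}\).
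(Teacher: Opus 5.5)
\textbf{Gap in Step 2.} The paper only cites this theorem. Your Steps 1 and 3 are the standard Pach--T\'oth scheme: Case~1 of the proof of Theorem~\ref{oddcross} runs the same recursion, with Lemma~\ref{lem:mat} as the separator. So everything rests on the bound $b(G)\le c\log n\sqrt{\mathrm{ocr}(G)+\sum_v d(v)^2}$, and your route to it does not work.

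The strong Hanani--Tutte theorem only applies when \emph{no} pair of independent edges crosses oddly. It does not give a redrawing in which only the originally odd pairs cross. Such a redrawing cannot exist in general: applied to an ocr-optimal drawing it would give $\mathrm{pcr}(G)\le\mathrm{ocr}(G)$ for every graph. But graphs with $\mathrm{ocr}(G)<\mathrm{pcr}(G)$ are known (Pelsmajer--Schaefer--\v{S}tefankovi\v{c}, T\'oth).

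The redrawing results that do exist (Pach--T\'oth; Pelsmajer--Schaefer--\v{S}tefankovi\v{c}) only make the even edges crossing-free. They yield $\mathrm{cr}(G)\le\binom{2\,\mathrm{ocr}(G)}{2}$, which under the square root is again linear in $\mathrm{ocr}(G)$. That is exactly the loss you found in your first attempt, and with $\mathrm{ocr}(G)$ as large as $m f_{k-1}(n)$ the recursion gives nothing. Even granting your redrawing, an odd pair may cross many times, so the crossing number in the Pach--Shahrokhi--Szegedy bound would still not be controlled by $\mathrm{ocr}(G)$.

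\textbf{What is needed.} Parity has to enter through a lower bound, not through a redrawing. The bisection lemma can be proved along the lines of Kolman and Matou\v{s}ek:
\begin{enumerate}
\item If every balanced cut of $G$ is large, find a piece with large edge expansion.
\item Route a complete graph $K_N$ through that piece by a multicommodity flow of small congestion (Leighton--Rao), and draw each edge of $K_N$ alongside its routing path.
\item Two independent edges of $K_N$ can then cross oddly only if their paths contain an oddly crossing pair of edges of $G$, or pass through a common vertex. Hence there are at most $(\text{congestion})^2\bigl(\mathrm{ocr}(G)+\sum_v d(v)^2\bigr)$ such pairs.
\item By the Hanani--Tutte theorem, every drawing of $K_5$ has two independent edges crossing oddly. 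Therefore every drawing of $K_N$ has $\Omega(N^4)$ such pairs, and comparing the two counts gives the lemma.
\end{enumerate}

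Two smaller points on Step 3. First, $\sum_v d(v)^2\le 2nm$ is dominated by $m f_{k-1}(n)$, so no degree-splitting is needed. Second, by Cauchy--Schwarz each level of the recursion can cost as much as the top level, so the levels are not dominated by the top one and you lose one more $\log n$. Done correctly, either by summing over levels or by induction on $n$ as in Case~1 of Theorem~\ref{oddcross}, the exponent grows by $4$ per step. This gives $(\log n)^{4k-8}$, within the stated bound, so there is no need to hit an increment of exactly $5$.
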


It is possible that, for every fixed \(k\), every \(n\)-vertex topological graph with no \(k\) edges that pairwise cross an odd number of times has only \(O_k(n)\) edges. While this remains open for all $k\geq 3$, our next result gives a substantially stronger bound for large \(k\), under the additional assumption that the edges of \(G\) form a \(t\)-intersecting family. 

\begin{theorem}\label{oddcross}
For all integers \(k\ge 2\) and $t\geq 1$, there exists a constant \(C=C(t)>0\) such that the following holds. Let \(G\) be an \(n\)-vertex topological graph whose edges form a \(t\)-intersecting family. If \(G\) contains no \(k\) edges that pairwise cross an odd number of times, then
\[
|E(G)| \le n(\log n)^{C\log k}.
\]
\end{theorem}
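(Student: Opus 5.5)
We follow the Fox--Pach--Suk strategy for $k$-quasi-planar graphs~\cite{FPS2}, replacing "cross" by "cross an odd number of times" and using Theorem~\ref{thm:dense} as the density-to-bi-clique engine. We argue by induction on $k$, proving a bound of the form $|E(G)|\le n(\log n)^{C\log k}$ with $C=C(t)$ chosen large. The case $k=2$ is Hanani--Tutte. Given $G$ with $m=n(\log n)^{C\log k}$ edges, we first remove low-degree vertices and cut $G$ along a balanced separator. This reduces to finding, inside a dense piece, two large edge families $E_1,E_2$ with every edge of $E_1$ crossing every edge of $E_2$ an odd number of times.

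\textbf{Step 1 (many odd crossings).} Let $\mathrm{ocr}(G)$ denote the number of pairs of edges crossing an odd number of times. We use the odd-crossing analogue of the crossing-lemma/separator machinery, as in Pach--T\'oth~\cite{PTdisjoint}. Redraw $G$ by the Hanani--Tutte/Pelsmajer--Schaefer--\v{S}tefankovi\v{c} trick, so that evenly crossing pairs are made disjoint; the number of crossings in the new drawing is then controlled by $\mathrm{ocr}(G)$. A separator theorem for graphs with few crossings then shows the following. If $\mathrm{ocr}(G)$ is small, say $\mathrm{ocr}(G)\le m^2/(\log n)^{C'}$, then $G$ splits along a separator of size $O(\sqrt{\mathrm{ocr}(G)+n})$ into two parts, each with at most $2n/3$ vertices. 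Recursing on the parts, with the inductive bound applied to them, gives the required edge bound. Hence we may assume $\mathrm{ocr}(G)\ge m^2/(\log n)^{C'}$, i.e., odd-crossing density $\delta\ge(\log n)^{-C'}$.

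\textbf{Step 2 (bi-clique).} The edges form a $t$-intersecting family. We would like them in general position as curves; endpoints lying on other edges only at shared vertices can be handled by slightly shortening each edge near its endpoints, which does not change parities of interior crossings. We then split $E(G)$ randomly into two halves $\mathcal B,\mathcal G$ preserving the odd density up to a constant. Theorem~\ref{thm:dense} gives $\mathcal B'\subseteq\mathcal B$ and $\mathcal G'\subseteq\mathcal G$ of sizes at least $\delta^c m\ge m/(\log n)^{cC'}$, with every edge of $\mathcal B'$ crossing every edge of $\mathcal G'$ oddly.

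\textbf{Step 3 (induction).} Choose $k_1=\lceil k/2\rceil$ and $k_2=\lfloor k/2\rfloor$. If $\mathcal B'$ contains $k_1$ pairwise oddly crossing edges and $\mathcal G'$ contains $k_2$ such edges, their union gives $k$ pairwise oddly crossing edges, a contradiction. So, say, $\mathcal B'$ has no $k_1$ pairwise oddly crossing edges. By induction, $|\mathcal B'|\le n(\log n)^{C\log(k/2)+O(1)}=n(\log n)^{C\log k - C+O(1)}$. Comparing with $|\mathcal B'|\ge m/(\log n)^{cC'}$ yields $m\le n(\log n)^{C\log k-C+cC'+O(1)}$, which contradicts the choice of $m$ once $C>cC'+O(1)$. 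The subgraph spanned by $\mathcal B'$ has at most $n$ vertices, so induction applies directly.

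\textbf{Main obstacle.} The delicate part is Step 1: obtaining a separator or crossing-lemma bound in terms of \emph{odd} crossings only. The log-exponent losses there must stay independent of $k$ (only $C'$, a constant), since otherwise the recursion gives $(\log n)^{O(k)}$ as in Pach--T\'oth rather than $(\log n)^{O(\log k)}$. I would first try the approach of Pach--T\'oth~\cite{PTdisjoint} restricted to one level, with the bisection width bounded by the odd crossing number. If that fails, I would use the $t$-intersecting hypothesis directly, noting that the total crossing count is at most $t$ times the number of crossing pairs.
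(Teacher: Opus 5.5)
Your architecture matches the paper's. You split according to whether the number of oddly crossing pairs is at most $|E|^2/(\log n)^{C'}$. In the sparse case you cut along a balanced edge cut and induct on the number of vertices; in the dense case you take a random split, apply Theorem~\ref{thm:dense}, and halve $k$. The gap is in Step~1 as you actually argue it, in two places.

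(i) No known redrawing makes all evenly crossing pairs disjoint while keeping the number of crossings linear in $\mathrm{ocr}(G)$. The Pelsmajer--Schaefer--\v{S}tefankovi\v{c} argument frees only edges that cross \emph{every} other edge evenly. What it yields in general is a quadratic bound $\mathrm{cr}(G)=O(\mathrm{ocr}(G)^2)$, so a crossing-number separator would give a cut of size $O(\mathrm{ocr}(G))$. That is useless when $\mathrm{ocr}(G)$ is about $|E|^2/\mathrm{polylog}$. Your $t$-intersecting fallback fails for the same reason: for $t\ge 2$, evenly crossing pairs can number $\Theta(|E|^2)$ while $\mathrm{ocr}(G)=0$.

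(ii) Even granting $\mathrm{cr}\approx\mathrm{ocr}$, the bound $O(\sqrt{\mathrm{ocr}(G)+n})$ has the wrong shape. Read as an edge cut, it is false: a star is planar but has bisection width about $n/3$. So a degree term is unavoidable, and deleting low-degree vertices does nothing about it. Read as a vertex separator, it is affordable in the recursion only when $\sqrt{\mathrm{ocr}(G)}$ is well below $n$, far below your threshold $m^2/(\log n)^{C'}$. In the complementary case the odd-crossing density is then only about $n^2/m^2=(\log n)^{-2C\log k}$. Theorem~\ref{thm:dense} would cost a $k$-dependent power of $\log n$, and the exponent would no longer close.

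The repair is the tool you list as your first fallback, and it is exactly what the paper uses: the Pach--T\'oth bound $b(G)\le c_1\log n\sqrt{\mathrm{ocr}(G)+\sum_v d(v)^2}$ (Lemma~\ref{lem:mat}). Since $\sum_v d(v)^2\le 2n|E|$, one of two things happens. Either $|E|<2n\log^6 n$ and you are done. Or, when $\mathrm{ocr}(G)\le |E|^2/\log^6 n$, you get $b(G)\le\sqrt2\,c_1|E|/\log^2 n$, which the recursion on $n$ absorbs. So $C'=6$ works and no degree pruning is needed.

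Two smaller fixes are also needed:
\begin{itemize}
\item Step~1 applies the bound to parts with fewer vertices but the same $k$, so you need induction on $n$ and $k$ jointly, not on $k$ alone.
\item In Step~3 you cannot write $C\log\lceil k/2\rceil=C\log k-C+O(1)$, because for odd $k$ the error $C\log(1+1/k)$ grows with $C$. Use $\lceil k/2\rceil\le 2k/3$ for $k\ge 3$ instead. This saves $C\log\frac32$ in the exponent, which beats the loss of about $6c$ from Theorem~\ref{thm:dense} once $C$ is large.
\end{itemize}
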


Our paper is organized as follows.  In the next section, we recall
\(r\)-divisions and modify the construction of Klein, Mozes, and Sommer
\cite{KMS13} to obtain a version with prescribed vertex constraints.  In
Section~\ref{sec:intersecting}, we use this \(r\)-division theorem to prove
a structural result for arrangements of blue and green curves, from which
Theorem~\ref{main} follows.  In Section~\ref{sec:top}, we apply
Theorem~\ref{thm:dense} to topological graphs and prove Theorem~\ref{oddcross}.

\section{An $r$-division with vertex constraints}

In this section we prove a variant of the $r$-division theorem that will be used later. The notion of an $r$-division was introduced by Frederickson~\cite{Fre87} as a refinement of the planar separator method of Lipton and Tarjan~\cite{LiptonTarjan1979}. A key ingredient in the construction is the following simple-cycle separator theorem of Miller.

\begin{theorem}[Miller~\cite{Miller1986}]\label{cycle}
Let $G$ be a triangulated planar graph with $N$ vertices embedded in the plane, and let $w:V(G)\to \mathbb R_{\ge 0}$ be a weight function with total weight $W$. Then there exists a simple cycle $C$ in $G$ such that $|C|=O(\sqrt N)$, and the total weight of the vertices strictly inside $C$ and strictly outside $C$ are both at most $2W/3$.
\end{theorem}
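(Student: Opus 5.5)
This is the classical simple-cycle separator of Miller, and the plan is to follow the Lipton--Tarjan breadth-first-search framework. The one new ingredient is that every separator must be a simple cycle, which we get from the triangulation hypothesis. We may assume \(N\) is large and \(W>0\).

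First we dispose of a heavy vertex. If some vertex \(v\) has \(w(v)>W/3\), take any triangular face through \(v\) as \(C\). The strict interior and strict exterior of \(C\) each avoid \(v\), so each has weight \(<2W/3\). From now on we assume every vertex has weight at most \(W/3\).

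\textbf{Step 1: the fundamental-cycle lemma.} For a triangulation \(G\) with a spanning tree \(T\) of depth \(d\), some fundamental cycle \(C_e\) (with \(e\notin T\)) is a simple cycle of length at most \(2d+1\) with both sides of weight at most \(2W/3\). To prove this:
\begin{itemize}
\item The duals of the non-tree edges form a spanning tree \(T^*\) of the dual graph, whose maximum degree is \(3\).
\item Each edge \(e\) of \(T^*\) corresponds to the cycle \(C_e\). The faces on one side of \(e\) in \(T^*\) are exactly the faces inside \(C_e\).
\item Weight inside \(C_e\) is therefore a subtree sum in \(T^*\) (the weight of the interior vertices of the union of those faces). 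Start at the root face and repeatedly step toward the unique heavy branch (heavy meaning interior weight \(>2W/3\)).
\item Degree at most \(3\) and the bound \(w(v)\le W/3\) on cycle vertices ensure that we stop at an edge where both sides have weight \(\le 2W/3\). This is the standard Lipton--Tarjan bookkeeping, now counted in vertex weights.
\end{itemize}

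\textbf{Step 2: reducing the depth.} Run BFS from a vertex \(r\), giving levels \(L_0,L_1,\dots\). Let \(i_m\) be the weighted median level. By pigeonhole on \(\sum_i |L_i|=N\), there are levels \(i_0\le i_m\le i_1\) with \(|L_{i_0}|,|L_{i_1}|\le \sqrt N\) and \(i_1-i_0\le 2\sqrt N\).

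In a triangulation, the edges between \(\bigcup_{j\le i}L_j\) and the remaining vertices bound faces. The vertices of \(L_i\) adjacent to a fixed component of \(\bigcup_{j>i}L_j\) form a simple cycle, since the link of that component's region is a cycle in a triangulated disc. Consequently each "level boundary" is a disjoint union of simple cycles of total length \(\le |L_i|\).

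\textbf{Step 3: combining the level cycles with the lemma.} There are two cases.
\begin{itemize}
\item Some single boundary cycle at level \(i_0\) or \(i_1\) already splits the weight with both sides at most \(2W/3\). We output it; its length is \(O(\sqrt N)\).
\item Otherwise, there is a unique region \(R\) between a boundary cycle at \(L_{i_0}\) and boundary cycles at \(L_{i_1}\) carrying the "middle" weight. Contract everything outside the \(L_{i_0}\)-cycle to a single root vertex, and collapse the regions beyond each \(L_{i_1}\)-cycle into single vertices. These collapsed vertices carry the outer weights, which are each \(\le 2W/3\) in this case. Retriangulate, and take the BFS tree from the root, which has depth \(O(\sqrt N)\). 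Step 1 then gives a balanced fundamental cycle.
\end{itemize}
Un-contracting in the second case may replace the root, or a collapsed vertex, by an arc of a level cycle. The result is still a simple cycle, now of length \(O(\sqrt N)\).

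The main obstacle is this last combination step: keeping the cycle simple and the weights balanced when contracted vertices are expanded back. We must also handle the situation where the fundamental cycle passes through a collapsed vertex whose own weight is large. My first attempt would be to give each collapsed vertex weight \(0\) and instead put its weight on an adjacent face, working with face weights in Step 1. If that fails, I would fall back on Miller's original treatment, which routes such cycles along the short boundary cycle.
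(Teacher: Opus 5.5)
The paper does not prove this statement; it imports it from Miller. Your outline therefore has to stand on its own, and it has a genuine gap. Step~3 is exactly where Miller's theorem goes beyond Lipton--Tarjan, and your fallback (``Miller's original treatment'') is circular.

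\textbf{The pivot.} The case split in Step~3 is also set up wrongly, because pivoting on the weighted median level does not give a single outer cycle around the middle weight. Suppose the graph splits, at some level $s<i_m-\sqrt N$, into ten long thin triangulated fingers, each carrying weight $W/10$ near its tip. Then every boundary cycle at level $i_0$ or $i_1$ encloses at most $W/10$, so your first case fails. No region bounded by one $L_{i_0}$-cycle carries the middle weight, so your second case has nothing to work with. The only region meeting all the fingers is the one containing $r$, and its depth is about $i_0\gg\sqrt N$.

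The pivot must follow the heavy chain instead. Let $m$ be the largest $i$ such that some component $K^*$ of $G[\bigcup_{j>i}L_j]$ has weight $>W/2$; these heavy components are nested. Choose $i_0\le m<i_1$ with $|L_{i_0}|,|L_{i_1}|\le\sqrt N$ and $i_1-i_0=O(\sqrt N)$. Let $K_0\supseteq K^*$ be the component beyond level $i_0$, with boundary cycle $C_0$.
\begin{itemize}
\item If $w(K_0)\le 2W/3$, output $C_0$.
\item Otherwise the outside of $C_0$ weighs less than $W/3$.
\item Every component $K'\subseteq K_0$ beyond level $i_1$ weighs at most $W/2$, by maximality of $m$. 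So either some $C_{K'}$ is already balanced, or every $w(K')<W/3$.
\end{itemize}

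\textbf{The un-contraction.} The bound $W/3$ on each collapsed piece, rather than your $2W/3$, is what makes un-contraction work.
\begin{itemize}
\item Cone the outer face of $C_0$ with an apex $\rho$ of weight $W-w(K_0)-w(C_0)$. Cone each hole $C_{K'}$ with an apex of weight $w(K')$.
\item Take the BFS tree of $G$ restricted to $C_0$ and to the vertices of $K_0$ at levels at most $i_1$. Join $\rho$ to all of $C_0$ and hang each hole apex as a leaf. The depth is at most $i_1-i_0+2$.
\item Apply Step~1. The Lipton--Tarjan lemma holds for arbitrary vertex weights: in the shrinking step, the inside of the heavier new cycle together with the new tree path carries at least half the old inside weight, so the outside stays below $2W/3$. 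Your hypothesis $w(v)\le W/3$ is not needed.
\item Simplicity: tree paths are strictly monotone in level. So the fundamental cycle meets $L_{i_0}$ only in the two children of $\rho$, when it passes through $\rho$. It contains at most one hole apex, since apexes are pairwise non-adjacent leaves, and it meets $L_{i_1}$ only in that apex's two neighbours. Replacing each apex by an arc of its boundary cycle therefore keeps the cycle simple and adds at most $|L_{i_0}|+|L_{i_1}|$ to its length.
\item Balance: each replacement sends the whole collapsed weight $w<W/3$ to whichever side you choose, and only removes vertices from the other side. Choosing the currently lighter side each time keeps both sides at most $\max\{2W/3,(W+w)/2\}=2W/3$.
\end{itemize}

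Two minor points. Boundary cycles of different components at the same level can share vertices, so they are not disjoint; this is harmless since you use one at a time, each of length at most $|L_i|$. Also, each boundary being a simple cycle genuinely uses the BFS layering and $3$-connectivity, not just connectivity of $K$ and $G-K$.
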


Let $G=(V,E,F)$ be a planar graph embedded in the plane. A \emph{region} of $G$ is a connected edge-induced subgraph of $G$. A \emph{division} of $G$ is a collection of regions $\mathcal R=\{R_1,\ldots,R_k\}$ such that every edge of $G$ lies in at least one region. Given a division $\mathcal R$ of $G$, a vertex $v\in V(G)$ is a \emph{boundary vertex} if it belongs to more than one region in $\mathcal R$. For a region $R\in \mathcal R$, we denote by $b(R)$ the set of vertices of $R$ that are boundary vertices. A \emph{hole} of a region \(R\) is a face of \(R\) that is not a face of \(G\).

\begin{theorem}\label{thm:mixed}
There exists an absolute constant \(r_0>0\) such that the following holds. Let \(G\) be a triangulated planar graph on \(N\) vertices embedded in the plane, and let \(Q\subseteq V(G)\). Then for every \(r\ge r_0\) and every \(\rho\ge 1\), there exists a division \(\mathcal R\) of \(G\) such that
\[
|\mathcal R|=O\!\left(\frac{N}{r}+\frac{|Q|}{\rho}\right),
\]
and every region \(R\in\mathcal R\) satisfies
\[
|V(R)|=O(r),\qquad |b(R)|=O(\sqrt r),\qquad |(V(R)\setminus b(R))\cap Q|=O(\rho),
\]
and has at most \(24\) holes.
\end{theorem}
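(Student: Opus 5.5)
We run the standard recursive construction of an \(r\)-division (Frederickson, in the hole-controlled form of Klein, Mozes and Sommer~\cite{KMS13}), with Theorem~\ref{cycle} applied to a weight function that also counts vertices of \(Q\). The only change from~\cite{KMS13} is that at each separation step we pick, according to what is currently too large, which quantity the separator balances: vertices, boundary vertices, holes, or interior \(Q\)-vertices. The final step is an accounting argument bounding the number of regions.

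\textbf{Construction.} We maintain a set of regions and recursively split any region \(R\) that violates one of the constraints, namely
\[
|V(R)|> c_1r,\qquad |b(R)|>c_2\sqrt r,\qquad |(V(R)\setminus b(R))\cap Q|>c_3\rho,\qquad \text{or more than a fixed number of holes}.
\]
To split \(R\), we first triangulate it, putting one new vertex inside each hole, and obtain a triangulated planar graph \(R^\triangle\) on \(O(|V(R)|)\) vertices. We then apply Theorem~\ref{cycle} to \(R^\triangle\), choosing the weights according to the violated constraint:
\begin{itemize}
\item uniform weight on vertices if \(R\) is too large;
\item weight on \(b(R)\) if \(R\) has too many boundary vertices;
\item weight on the hole-vertices if \(R\) has too many holes;
\item weight \(1\) on the vertices of \((V(R)\setminus b(R))\cap Q\) if \(R\) has too many interior \(Q\)-vertices.
\end{itemize}
The separating cycle, with its passages through hole-vertices removed, splits \(R\) into the edges inside the cycle and the edges outside it. This produces child regions, each containing at most a \(2/3\) fraction of the targeted weight, plus \(O(\sqrt{|V(R)|})\) new boundary vertices. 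Following~\cite{KMS13}, we cycle through the criteria in a fixed order. The standard argument there shows that the number of holes stays bounded by a constant; with their exact rounds the bound is \(24\), and we claim ours is the same because the new \(Q\)-separation is just an additional balanced cycle split, handled exactly like a size split. Connectedness of regions is restored by splitting each region into its connected components. The number of holes is not increased by this.

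\textbf{Counting.} The total number of regions is bounded by charging splits.
\begin{itemize}
\item Splits caused by size or boundary are counted exactly as in Frederickson's analysis. Summed over the recursion they give \(O(N/r)\) regions, because the total number of boundary vertices is \(O(N/\sqrt r)\) and each split adds \(O(\sqrt{r})\) of them. We need \(r\ge r_0\) here so that the recursion for the total boundary, \(\sum \sqrt{\cdot}\), converges.
\item A \(Q\)-split is applied only to a region whose size and boundary are already fine, that is, \(|V(R)|=O(r)\), and which carries more than \(c_3\rho\) interior \(Q\)-vertices. Consider the recursion tree restricted to \(Q\)-splits. Interior \(Q\)-vertices of distinct sibling regions are disjoint: a vertex lying in two regions becomes a boundary vertex and is no longer counted. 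Each \(Q\)-split reduces the interior \(Q\)-weight of each piece by a constant factor, so a subtree starting from weight \(q\) yields \(O(q/\rho)\) leaves. Summing over all regions, the leaves number \(O(|Q|/\rho)\).
\end{itemize}

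The main obstacle is the side effect of a \(Q\)-split: it creates \(O(\sqrt r)\) new boundary vertices in a region of size \(O(r)\). These could violate the \(|b(R)|=O(\sqrt r)\) bound and force further boundary splits, and those splits must also be charged. We handle this by charging each such follow-up boundary or hole split to the \(Q\)-split that triggered it. Since a region whose boundary exceeds the limit by only a constant factor needs only \(O(1)\) boundary-balancing rounds (as in~\cite{KMS13}), each \(Q\)-split triggers \(O(1)\) extra regions. The total number of regions is therefore \(O(N/r+|Q|/\rho)\). Splitting never increases \(|V(R)|\) or the interior \(Q\)-count, so all the stated bounds hold at termination.
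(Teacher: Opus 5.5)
Your route works in outline, but it is organized differently from the paper's. The paper never splits the construction into phases. A single recursion from the root rotates through four balancing steps (all vertices, boundary vertices, interior $Q$-vertices, holes) according to depth mod $4$. The $24$-hole bound then comes directly from $h(y)\le\frac23 h(x)+4$ over each four-level block, since $\frac23\cdot 24+4=20$; the number $24$ comes from this four-step rotation, not from the rounds of~\cite{KMS13}. $Q$-steps taken while regions are still large are absorbed as unbalanced splits in the $16$-branch recurrence of Lemma~\ref{lem:kms-recurrence}. Phases appear only in the analysis. The antichain $S_r(\hat x)$ where $n\le c_0r$ first holds has $O(N/r)$ members and total boundary $O(N/\sqrt r)$. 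Below each such $x$, one potential $\beta/(c_3\sqrt{c_0 r})+q/\rho-15$, tracked over four-level blocks, pays for boundary splits and $Q$-splits together, giving $O(1+\beta(x)/\sqrt r+q(x)/\rho)$ leaves. The rotation makes boundary repair automatic and lets the hole bound and the recurrence of~\cite{KMS13} be reused verbatim. You instead take an $r$-division as a black box and refine each region independently by $Q$-splits followed by local repair. That is more modular and shows clearly where the $|Q|/\rho$ term comes from. However, the repair step, which you correctly identify as the crux, is only asserted.

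To turn your sketch into a proof, four points need fixing.
\begin{enumerate}
\item Your schedule is inconsistent. A fixed rotation is incompatible with ``$Q$-splits only once size and boundary are fine''. In the phased version, the hole bound is not inherited from any rotation. Enforce it with a trigger: hole-split whenever $h>24$. Other splits add at most one hole, and a hole-balanced split leaves at most $\frac23 h+1$.
\item Your claim that $O(1)$ repair splits suffice needs the boundary threshold $c_2$ to be large compared with $c\sqrt{c_1}$, where $c$ is the separator constant. A boundary-balanced split of a region with at most $c_1 r$ vertices takes $\beta$ to at most $\frac23\beta+c\sqrt{c_1 r}$. So with $c_2\ge 10c\sqrt{c_1}$, one boundary split and at most one hole split after each $Q$-split restore all constraints.
\item Each $Q$-split together with its repairs then has $K=O(1)$ children, each with $q\le\frac23 q$ and with total at most $q$. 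Your halving count must therefore run on this $K$-ary tree, not on a binary ``$Q$-split tree''. The potential $\max\{1,\,3Kq/(c_3\rho)-K\}$ works by the same three-case computation as in the paper. It gives $O(1+q/\rho)$ leaves per region; the $+1$ is needed, and summed over the $O(N/r)$ regions it is harmless.
\item Drop the step of splitting into connected components. The two sides of a separator are already connected after deleting artificial vertices, because consecutive neighbours of an artificial vertex are joined by real hole-boundary edges. If they were not connected, your count would not control the number of components.
\end{enumerate}
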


\noindent The proof of Theorem~\ref{thm:mixed} is a simple modification of the recursive
$r$-division construction of Klein, Mozes, and Sommer~\cite{KMS13}. The only
change is that we add one more balancing step, in order to control the set \(Q\).
For the reader's convenience, we include the details.

We shall use the following recurrence estimate from Klein, Mozes, and Sommer~\cite[Section~2.7]{KMS13}. We state the form needed here, which follows from their proof with only minor changes in the constants.

\begin{lemma}\label{lem:kms-recurrence}
Let \(1/2\le \lambda<1\), and let \(c,c'\) be positive constants. Suppose that, for every \(\tau\ge 1\), a function \(T_\tau\) satisfies
\[
T_\tau(m)\le
\begin{cases}
c' m^\lambda+\displaystyle\max_{\alpha_1,\ldots,\alpha_{16}}
       \sum_{i=1}^{16} T_\tau(\alpha_i m),& m>\tau,\\[1ex]
0,&m\le \tau,
\end{cases}
\]
where the maximum is over all \(\alpha_1,\ldots,\alpha_{16}\ge 0\) such that
\[
\alpha_i\le {2\over 3}+{c\over \sqrt m}\quad\text{for all }i,
\qquad
1\le \sum_{i=1}^{16}\alpha_i\le 1+{c\over \sqrt m}.
\]
Then, for \(\tau\) sufficiently large,
\[
T_\tau(m)=O\!\left({m\over \tau^{1-\lambda}}\right),
\]
where the implicit constant depends only on \(c,c'\) and \(\lambda\).
\end{lemma}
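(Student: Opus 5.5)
The plan is to unroll the recursion into a recursion tree and bound, level by level, the total contribution of the additive terms \(c'm^\lambda\). Fix \(\tau\) large and a value \(m>\tau\). Each internal node of the tree carries a size parameter \(m_v>\tau\), and \(T_\tau(m)\) is at most \(\sum_v c' m_v^{\lambda}\) over internal nodes \(v\). Every node has at most \(16\) children with sizes \(\alpha_i m_v\), and the leaves have size at most \(\tau\), where they contribute \(0\). So the task is to control two things: the number of internal nodes whose size lies in a given dyadic range, and the total size mass at each depth.

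\textbf{Step 1: mass growth.} I will show that the total size mass at depth \(d\) satisfies \(\sum_{v\text{ at depth }d} m_v\le C_0 m\) uniformly in \(d\), with \(C_0\) depending only on \(c\). A node of size \(m_v\) inflates the mass by a factor at most \(1+c/\sqrt{m_v}\). The condition \(\alpha_i\le 2/3+c/\sqrt m\) gives geometric decay along every root-to-node path once \(m_v\) is large, say \(m_v\le (3/4)^{d}m\) roughly. Consequently the product of inflation factors along any path is at most \(\prod_j (1+c(4/3)^{j/2}\tau^{-1/2})\), taken over the path read backwards from the deepest internal level. This product is bounded by \(\exp(O(c/\sqrt\tau))\le 2\) for \(\tau\) large. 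The one subtlety is that sizes are only bounded below by \(\tau\) at internal nodes, so the ordering of the product must start from the bottom of the path. Summing over paths weighted by mass, the mass at every depth is at most \(2m\).

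\textbf{Step 2: counting nodes by size.} Group the internal nodes into classes \(S_j=\{v: m_v\in (\tau(4/3)^{j},\tau(4/3)^{j+1}]\}\). Along any path the sizes decrease by a factor of at least \(3/4\) per step, so no two nodes of \(S_j\) lie on a common root-to-leaf path up to an \(O(1)\) overlap. Hence \(S_j\) is essentially an antichain, and the mass bound of Step 1, which applies to antichains by the same argument, gives \(|S_j|\tau(4/3)^j\le O(m)\). The contribution of \(S_j\) is then
\[
O\!\left(\frac{m}{\tau(4/3)^j}\right)\cdot c'\bigl(\tau(4/3)^{j+1}\bigr)^{\lambda}=O\!\left(\frac{m}{\tau^{1-\lambda}}(3/4)^{j(1-\lambda)}\right).
\]

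\textbf{Step 3: summation.} Summing over \(j\ge 0\) gives a geometric series with ratio \((3/4)^{1-\lambda}<1\), which yields \(T_\tau(m)=O(m/\tau^{1-\lambda})\) with a constant depending only on \(c,c'\) and \(\lambda\).

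I expect the main obstacle to be the mass-inflation control in Step 1. The requirement that \(\tau\) be sufficiently large enters exactly there, and care is needed when the \(\alpha_i\) are tiny or zero. Tiny or zero \(\alpha_i\) produce leaves immediately and only help, but the geometric-decay claim must be made per path rather than per level. If this bookkeeping becomes messy, the fallback is to follow Klein--Mozes--Sommer directly: prove by induction the stronger hypothesis \(T_\tau(m)\le A m/\tau^{1-\lambda}-Bm^{\lambda}\), choosing \(B\) large enough to absorb \(c'm^\lambda\) and the \(c/\sqrt m\) excess. This works by concavity of \(x\mapsto x^\lambda\), since \(\sum\alpha_i^\lambda\ge (\max\alpha_i)^{\lambda-1}\sum\alpha_i\ge (3/4)^{\lambda-1}>1\) for \(m\) large.
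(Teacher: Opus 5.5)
Your main argument is correct, and it takes a different route from the one the paper relies on. The paper does not prove this lemma itself; it defers to Klein--Mozes--Sommer. Their analysis is, as in Frederickson's original \(r\)-division bound, an induction with a subtracted lower-order term, which is essentially your fallback.

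You instead unroll the recursion into a tree and charge the terms \(c'm_v^\lambda\) by size classes. For \(\tau\ge 144c^2\), every child has at most \(3/4\) of its parent's size, so each \(S_j\) is an exact antichain and no \(O(1)\)-overlap caveat is needed. The bottom-up inflation product then gives \(\sum_{v\in S_j}m_v\le 2m\), and the geometric series in \((3/4)^{1-\lambda}\) yields \(T_\tau(m)\le \frac{2c'(4/3)^{\lambda}}{1-(3/4)^{1-\lambda}}\cdot\frac{m}{\tau^{1-\lambda}}\).

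Two things to fix. First, the exponent slip: the \(i\)-th ancestor above the bottom internal node of a path has size at least \((4/3)^i\tau\), so its inflation factor is at most \(1+c(3/4)^{i/2}\tau^{-1/2}\). With \((4/3)^{i/2}\), as you wrote, the product diverges. Second, the unrolling: \(T_\tau\) is only bounded above by a maximum, which may only be a supremum. Make the unrolling rigorous by induction on \(\lceil\log_{4/3}(m/\tau)\rceil\), comparing \(T_\tau(m)\) with the supremum of \(\sum_v c'm_v^\lambda\) over admissible recursion trees rooted at \(m\).

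As for what each route buys: yours never uses \(\lambda\ge 1/2\) or the lower bound \(\sum_i\alpha_i\ge 1\). It therefore works for every \(0\le\lambda<1\) and gives explicit constants. The substitution induction avoids the tree bookkeeping, but that is exactly where the two extra hypotheses are used. The condition \(\sum_i\alpha_i\ge 1\) together with concavity gives \(\sum_i\alpha_i^\lambda\ge(4/3)^{1-\lambda}\). The condition \(\lambda\ge 1/2\) is needed to absorb the mass excess \(Ac\sqrt m/\tau^{1-\lambda}\) into \(Bm^\lambda\).

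If you do use the fallback, be careful with small arguments. The hypothesis \(Am/\tau^{1-\lambda}-Bm^\lambda\) is negative there, so children of size at most \(\tau\) cannot be fed into it. They must instead be paid for by the \(A\)-mass they remove from the main term. This works with \(A=(4/3)^{1-\lambda}B\), \(B\) a suitable multiple of \(c'\), and \(\tau\) large in terms of \(c\) and \(\lambda\).
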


\begin{proof}[Proof of Theorem \ref{thm:mixed}]
Let \(G\) be a triangulated planar graph on \(N\) vertices embedded in the plane, and let \(Q\subseteq V(G)\). We construct a rooted binary tree \(\mathcal T\) as follows. Each node \(x \in \mathcal T\) corresponds to a connected embedded region \(R_x\subseteq G\), with the root \(\hat x \in \mathcal T\) corresponding to \(G\). For each node \(x\), write
\[
n(x):=|V(R_x)|,\qquad
\beta(x):=|b(R_x)|,\qquad
q(x):=|(V(R_x)\setminus b(R_x))\cap Q|,
\]
and let \(h(x)\) denote the number of holes of \(R_x\).

Fix a sufficiently large absolute constant \(c_0\). The leaves of the final decomposition will be the rootmost nodes \(x\) satisfying
\begin{equation}\label{eq:leaf-threshold-rdiv}
n(x)\le c_0 r,\qquad
\beta(x)\le c_0\sqrt r,\qquad
q(x)\le c_0\rho.
\end{equation}
Moreover, we shall show that every region created in the recursion has at most \(24\) holes.

Suppose that \(x\in \mathcal T\) is not a leaf, and let \(R_x\) be the corresponding region. We construct the two children of \(R_x\) as follows. Since \(G\) is triangulated, every natural face of \(R_x\) is already a triangle. We triangulate each hole of \(R_x\) by placing one \emph{artificial vertex} in the hole and joining it to all vertices on the boundary of that hole with \emph{artificial edges}. This gives a triangulated planar graph \(R'_x\). Since the number of holes will be bounded by an absolute constant, \(R'_x\) has \(O(n(x))\) vertices.

Let \(d(x)\) be the depth of \(x\) in \(\mathcal T\). We choose the weight function in \Cref{cycle} according to \(d(x) \text{ } (\bmod \text{ } 4 )\). If \(d(x)\equiv 0\pmod 4\), then every vertex of \(R_x\) has weight \(1\). If \(d(x)\equiv 1\pmod 4\), then precisely the vertices in \(b(R_x)\) have weight \(1\). If \(d(x)\equiv 2\pmod 4\), then precisely the vertices in
\[
(V(R_x)\setminus b(R_x))\cap Q
\]
have weight \(1\). Finally, if \(d(x)\equiv 3\pmod 4\), then precisely the artificial vertices corresponding to the holes of \(R_x\) have weight \(1\). In all cases, every other vertex of \(R'_x\) has weight \(0\). Thus the separator balances, in turn, the number of vertices, the number of boundary vertices, the number of non-boundary vertices of \(Q\), and the number of holes.

Let \(C_x\) be the separator cycle. Then $|C_x|\le c_1\sqrt{n(x)}$, for an absolute constant \(c_1\). Let \(R_{x,0}\) and \(R_{x,1}\) be the two subregions consisting of the edges of \(R_x\) inside-or-on and outside-or-on \(C_x\), after deleting the artificial vertices and artificial edges. These two regions are the children of \(R_x\).

If \(x_0,x_1\) are the children of \(x\), then
\begin{equation}\label{eq:sum-n-beta-q}
n(x_0)+n(x_1)\le n(x)+c_1\sqrt{n(x)},
\end{equation}
\begin{equation}\label{eq:sum-beta}
\beta(x_0)+\beta(x_1)\le \beta(x)+c_1\sqrt{n(x)},
\end{equation}
and
\begin{equation}\label{eq:sum-q}
q(x_0)+q(x_1)\le q(x).
\end{equation}
Indeed, the only vertices that can be duplicated are vertices on \(C_x\). If a vertex of \(Q\) lies on \(C_x\), then it becomes a boundary vertex in the children, and so it is not counted by \(q\) in either child.

The balancing step gives the following additional estimates. If \(d(x)\equiv 0\pmod 4\), then
\begin{equation}\label{eq:n-balanced}
\max\{n(x_0),n(x_1)\}\le {2\over 3}n(x)+c_1\sqrt{n(x)}.
\end{equation}
If \(d(x)\equiv 1\pmod 4\), then
\begin{equation}\label{eq:beta-balanced}
\max\{\beta(x_0),\beta(x_1)\}\le {2\over 3}\beta(x)+c_1\sqrt{n(x)}.
\end{equation}
If \(d(x)\equiv 2\pmod 4\), then
\begin{equation}\label{eq:q-balanced}
\max\{q(x_0),q(x_1)\}\le {2\over 3}q(x).
\end{equation}

We next bound the number of holes. Say that a hole of \(R_x\) is fully contained on one side of \(C_x\) if all triangles obtained from its artificial triangulation lie on that side. Suppose that \(k\) holes are fully contained on one side of \(C_x\). Then the corresponding child has at most \(k+1\) holes, as the separator creates at most one new hole. Thus a non-hole-balancing step increases the number of holes along a branch by at most one. At a hole-balancing step, each child receives at most two-thirds of the old holes, plus one new hole. Hence, over one full four-level cycle,
\[
h(y)\le {2\over 3}h(x)+4.
\]
Thus, if \(h(x)\le 24\), then after one full four-level cycle
\[
h(y)\le {2\over 3}\cdot 24+4=20,
\]
and during the next three intermediate levels the number of holes is at most \(23\). Since the root has no holes, every region created by the recursion has at most \(24\) holes.

\medskip

We now bound the number of leaves of \(\mathcal T\). Since the leaves are precisely the regions in \(\mathcal R\), it remains to prove
\[
|\mathcal R|=O\!\left(\frac{N}{r}+\frac{|Q|}{\rho}\right).
\]
For a node \(x\), let \(S_r(x)\) be the set of rootmost descendants \(y\) of \(x\) such that $n(y)\le c_0r,$ but the parent of \(y\) has more than \(c_0r\) vertices. We say that a set $S$ of descendants of \(x\) is an \emph{antichain} if no one of them is an ancestor of another. For such an antichain \(S\), we define
\[
L(x,S):=-n(x)+\sum_{y\in S} n(y).
\]
Thus \(L(x,S)\) is the total excess in vertex multiplicity when
\(R_x\) is replaced by the regions \(R_y\), \(y\in S\). We shall use it exactly
as in the proof of Lemma~8 of Klein, Mozes, and Sommer~\cite{KMS13}.

Consider four consecutive levels of the recursion, starting at a node \(x\).
Let \(y_1,\ldots,y_k\) be the rootmost descendants of \(x\) which either have
already stopped with \(n(y_i)\le c_0r\), or have depth \(d(x)+4\). Then
\(k\le 16\), and the regions \(R_{y_1},\ldots,R_{y_k}\) cover \(R_x\).
Along each path from \(x\) to one of these descendants, the separator balances
the number of vertices once. By \eqref{eq:sum-n-beta-q} and \eqref{eq:n-balanced},
applied over this four-level block, we have
\[
\sum_{i=1}^k n(y_i)\le n(x)+c_2\sqrt{n(x)},
\qquad
\max_i n(y_i)\le {2\over 3}n(x)+c_2\sqrt{n(x)}
\]
for an absolute constant \(c_2\). Therefore, by the same recurrence argument as
in Lemma~8 of~\cite{KMS13}, the function
\[
B_r(m):=\max L(x,S_r(x)),
\]
where the maximum is over all nodes \(x\) with \(n(x)\le m\), satisfies the
recurrence in \Cref{lem:kms-recurrence} with \(\lambda=1/2\) and \(\tau=c_0r\). Hence
\begin{equation}\label{eq:first-L-bound}
L(\hat x,S_r(\hat x))=O\!\left({N\over \sqrt r}\right).
\end{equation}
It follows that
\[
\sum_{y\in S_r(\hat x)} n(y)
=
N+L(\hat x,S_r(\hat x))
=O(N).
\]
Since the parent of each \(y\in S_r(\hat x)\) has more than \(c_0r\) vertices, we get
\begin{equation}\label{eq:Sr-size}
|S_r(\hat x)|=O(N/r).
\end{equation}
Indeed, if a vertex belongs to \(t\) regions in this antichain, then it contributes \(t-1\) to \(L(\hat x,S_r(\hat x))\). The vertices with \(t\ge 2\) are precisely the boundary vertices of the antichain, and their total boundary multiplicity is at most twice their total contribution to \(L(\hat x,S_r(\hat x))\). Thus \eqref{eq:first-L-bound} gives
\begin{equation}\label{eq:boundary-sum}
\sum_{y\in S_r(\hat x)} \beta(y)
=
O\!\left({N\over \sqrt r}\right).
\end{equation}

Now fix \(x\in S_r(\hat x)\). All descendants of \(x\) have \(O(r)\) vertices and at most \(24\) holes. Let \(S'_r(x)\) be the set of rootmost descendants \(y\) of \(x\) such that
\[
\beta(y)\le c_0\sqrt r
\qquad\text{and}\qquad
q(y)\le c_0\rho.
\]
We claim that
\begin{equation}\label{eq:local-refinement}
|S'_r(x)|
=
O\!\left(1+{\beta(x)\over \sqrt r}+{q(x)\over \rho}\right).
\end{equation}

To prove the claim, again group the recursion below \(x\) into blocks of four levels. Let \(y_1,\ldots,y_k\) be the rootmost descendants of \(x\) that either satisfy the two stopping inequalities or have depth \(d(x)+4\). Then \(k\le 16\).  Since \(n(x)\le c_0r\), every region occurring in this four-level block has \(O(c_0r)\) vertices. Hence there is an absolute constant \(c_3\) such that
\[
\sum_{i=1}^k \beta(y_i)\le \beta(x)+c_3\sqrt{c_0r},
\qquad
\sum_{i=1}^k q(y_i)\le q(x).
\]
Moreover, for every \(y_i\) with \(|S'_r(y_i)|>1\), we have $\beta(y_i)\le {2\over3}\beta(x)+c_3\sqrt{c_0r}$ and $q(y_i)\le {2\over3}q(x).$  Choose \(c_0\) sufficiently large so that $c_0\ge 50$ and ${\sqrt{c_0}\over c_3}\ge 50.$  We prove by induction that
\[
|S'_r(x)|
\le
\max\left\{
1,\,
{\beta(x)\over c_3\sqrt{c_0r}}
+{q(x)\over\rho}
-15
\right\}.
\]
If \(x\) satisfies the two stopping inequalities, then \(S'_r(x)=\{x\}\). Otherwise, ${\beta(x)\over c_3\sqrt{c_0r}}+{q(x)\over\rho}>50.$  Let \(s\) be the cardinality of $\{i:|S'_r(y_i)|>1\}.$  We consider the following cases.

\medskip

\noindent \emph{Case 0.}  If \(s=0\), then
\[
|S'_r(x)|\le 16
\le
{\beta(x)\over c_3\sqrt{c_0r}}+{q(x)\over\rho}-15.
\]

\medskip

\noindent \emph{Case 1.} If \(s=1\), say \(|S'_r(y_1)|>1\), then
\[
\begin{aligned}
|S'_r(x)|
&\le |S'_r(y_1)|+k-1\\
&\le {\beta(y_1)\over c_3\sqrt{c_0r}}
     +{q(y_1)\over\rho}\\
&\le {2\over3}\left(
{\beta(x)\over c_3\sqrt{c_0r}}+{q(x)\over\rho}
\right)+1\\
&\le {\beta(x)\over c_3\sqrt{c_0r}}
     +{q(x)\over\rho}-15.
\end{aligned}
\]

\medskip

\noindent \emph{Case 2.} If \(s\ge 2\), let $I=\{i:|S'_r(y_i)|>1\}.$  Then
\[
\begin{aligned}
|S'_r(x)|
&\le
\sum_{i\in I}\left(
{\beta(y_i)\over c_3\sqrt{c_0r}}
+{q(y_i)\over\rho}-15
\right)+(k-s)\\
&\le
{\beta(x)\over c_3\sqrt{c_0r}}
+{q(x)\over\rho}
+1-15s+16-s\\
&\le
{\beta(x)\over c_3\sqrt{c_0r}}
+{q(x)\over\rho}-15.
\end{aligned}
\]
Hence
\[
|S'_r(x)|
=
O\!\left(
1+{\beta(x)\over\sqrt r}+{q(x)\over\rho}
\right).
\]

The leaves of the final decomposition are precisely the nodes in \(S'_r(x)\), over all \(x\in S_r(\hat x)\). Therefore, by \eqref{eq:Sr-size}, \eqref{eq:boundary-sum}, and the fact that non-boundary vertices of \(Q\) are counted in at most one region of any antichain,
\begin{align*}
|\mathcal R|
&\le \sum_{x\in S_r(\hat x)} |S'_r(x)| \\
&=
O\!\left(
|S_r(\hat x)|
+{1\over \sqrt r}\sum_{x\in S_r(\hat x)}\beta(x)
+{1\over \rho}\sum_{x\in S_r(\hat x)}q(x)
\right)\\
&=
O\!\left({N\over r}+{|Q|\over \rho}\right).
\end{align*}
Each leaf region satisfies \eqref{eq:leaf-threshold-rdiv}, and every region has at most \(24\) holes. This proves the theorem.
\end{proof}

\section{A structure theorem for $t$-intersecting curves}\label{sec:intersecting}

The goal of this section is to prove Theorem \ref{main}.  Recall that a set \(\Delta\subset \mathbb R^2\) is \emph{path-connected} if any two
points of \(\Delta\) can be joined by a continuous curve contained in
\(\Delta\).  A \emph{domain} is an open path-connected subset of
\(\mathbb R^2\).  We write \(\partial\Delta\) for the boundary of
\(\Delta\), \(\operatorname{int}(\Delta)\) for its interior, and $\operatorname{cl}(\Delta)$ for its closure.

All path-connected sets \(\Delta\subset\mathbb R^2\) considered in this
paper have boundary contained in a finite arrangement of Jordan arcs and
Jordan curves, where any two arcs or curves meet in only finitely many
points.  Thus the arrangement has finitely many vertices, edges, and faces,
and contains \(\partial\Delta\).  Hence every connected component of
\(\mathbb R^2\setminus\Delta\) is a union of cells of this finite
arrangement, and is therefore path-connected.  We call these components the
\emph{complementary components} of \(\Delta\).

We now establish the following technical result.

\begin{theorem}\label{thm:red-blue-region}
For every \(t\ge 1\), the following holds. Let \(\mathcal B\) be a set of
\(m\) blue curves, and let \(\mathcal G\) be a set of \(n\) green curves, all
contained in a domain \(\Delta\), such that \(\mathcal B\cup\mathcal G\) is
\(t\)-intersecting and in general position.

Then, for every \(\delta\in(0,1)\) and every nonempty point set \(P\)
obtained by selecting at most one endpoint from each curve in \(\mathcal B\),
there is a domain $\Delta'$ with \(\operatorname{cl}(\Delta')\subset\Delta\) such that the following hold.
\begin{enumerate}
    \item $|P\cap\Delta'|\ge \Omega\left({\delta\over t}|P|\right).$

    \item At most \(O(\lceil\sqrt\delta(m+n)\rceil)\) curves in \(\mathcal G\) intersect
    \(\operatorname{cl}(\Delta')\).

    \item The total number of crossing points between the curves in
    \(\mathcal B\cup\mathcal G\) and \(\partial\Delta'\) is
    \(O(\lceil\sqrt\delta(m+n)\rceil)\).

    \item \(\Delta\setminus\operatorname{cl}(\Delta')\) has at most \(24\)
    path-connected components.
\end{enumerate}
\end{theorem}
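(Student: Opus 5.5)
We apply Theorem~\ref{thm:mixed} to a planar graph built from the arrangement of \(\mathcal B\cup\mathcal G\), with \(Q\) the vertices representing points of \(P\), and take \(\Delta'\) to be the interior of a well-chosen region minus its holes.

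\textbf{Step 1: the planar graph.} Form the planar arrangement \(\mathcal A\) of all curves in \(\mathcal B\cup\mathcal G\). Its vertices are the crossing points and curve endpoints. Since the family is \(t\)-intersecting, there are at most \(t\binom{m+n}{2}\) crossings. This is too many, since we need only \(O(\sqrt\delta(m+n))\) boundary crossings. So we do not use the full arrangement directly. Instead, we first pass to a random sample: pick each curve independently with probability \(p\approx \sqrt\delta\), or use a cutting-style sparsification. Alternatively, we keep the full arrangement \(\mathcal A\) and use weights, with \(N\) the number of arrangement vertices and \(r\) chosen so that a separator of size \(\sqrt r\) crosses few curves.

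The cleaner plan is as follows. Triangulate \(\mathcal A\) inside \(\Delta\), adding auxiliary edges that run inside faces and so cross no curve. Then every edge of the separator cycle is either an arc of a curve or an auxiliary edge. A separator vertex that is an arrangement vertex accounts for \(O(1)\) curve crossings of the perturbed boundary \(\partial\Delta'\), obtained by pushing the cycle slightly off vertices into faces. Hence the crossings of \(\partial\Delta'\) are \(O(|b(R)|)\).

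To make \(|b(R)|=O(\sqrt r)\) comparable to \(\sqrt\delta(m+n)\), we need \(N\approx (m+n)^2\) and \(r\approx\delta (m+n)^2\), which is consistent with \(t\)-intersecting arrangements having \(N=O(t(m+n)^2)\) vertices. Set \(\rho=\delta|P|/t\), up to constants.

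\textbf{Step 2: choosing the region.} Theorem~\ref{thm:mixed} gives
\[
|\mathcal R|=O\!\left(\frac{t(m+n)^2}{\delta(m+n)^2}+\frac{|P|}{\rho}\right)=O(t/\delta).
\]
Each point of \(P\) is either an interior vertex of a unique region or a boundary vertex. There are only \(O(|\mathcal R|\sqrt r)\) boundary vertices in total. If these cover most of \(P\), we can instead perturb the boundary to put such points inside, since each point of \(P\) lies on one curve only and so is not a crossing. By averaging, some region \(R\) has \(\Omega(\delta|P|/t)\) points of \(P\) in its interior, which gives item~1.

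Let \(\Delta'\) be the open set bounded by the outer boundary of \(R\) with its at most \(24\) holes removed. After perturbation into \(\Delta\), we have \(\operatorname{cl}(\Delta')\subset\Delta\). The components of \(\Delta\setminus\operatorname{cl}(\Delta')\) are the holes together with the outer part. That gives at most \(25\) components a priori, so we must argue that the outer face counts as one of the holes to get \(24\) and item~4.

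\textbf{Step 3: green curves meeting \(\operatorname{cl}(\Delta')\) (main obstacle).} A green curve meeting \(\operatorname{cl}(\Delta')\) either crosses \(\partial\Delta'\), and there are \(O(\sqrt r)\) such curves, or lies entirely inside \(\Delta'\). Curves of the second kind must be controlled by vertex count, since \(|V(R)|=O(r)\).

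The difficulty is that \(O(r)=O(\delta(m+n)^2)\) vertices do not bound the number of curves by \(\sqrt\delta(m+n)\), because a green curve inside might contribute only its two endpoints. To fix this, I would add \(\mathcal G\)'s endpoints as a second constrained set, balanced like \(Q\) with threshold \(\rho_G=\sqrt\delta(m+n)\). Alternatively, I would put weight on green endpoints in an extra balancing step, extending Theorem~\ref{thm:mixed} by one more mod class; the same proof goes through with \(20\) children per block. Then \(|\mathcal R|\) gains an extra term \(n/\rho_G\le\sqrt\delta^{-1}\). This term is \(O(1/\delta)\), so the averaging in Step~2 still yields item~1 while item~2 now holds. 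Verifying that the recurrence bookkeeping survives this extra constraint is the step I expect to need the most care.
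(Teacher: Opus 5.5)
Your overall plan matches the paper's: the arrangement graph triangulated inside $\Delta$ with $|V(G')|=O(t(m+n)^2)$, Theorem~\ref{thm:mixed} with $r=\delta(m+n)^2$, pigeonhole over $O(t/\delta)$ regions, $\Delta'$ a thin neighborhood of one region with its holes cut out, and green curves controlled by a prescribed vertex set at threshold $\sqrt\delta(m+n)$. The problem is \emph{where you spend the prescribed set}. Taking $Q=P$ with $\rho=\delta|P|/t$ contributes nothing, because it only caps the number of $P$-points per region. Item~1 is pure averaging. Let $\Delta_R$ consist of the closed edges of $R$ together with the faces of $R$ that are faces of $G'$. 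Then every vertex of $G'$, boundary or not, lies in some $\Delta_R$, so $\sum_R|P\cap\Delta_R|\ge|P|$ and some region has $\Omega(\delta|P|/t)$ points of $P$; no perturbation argument is needed. Moreover, $\rho=\delta|P|/t$ can be below $1$, which violates the hypothesis of Theorem~\ref{thm:mixed}.

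Having used up $Q$ on $P$, you are forced into a two-set version of Theorem~\ref{thm:mixed}. The paper does not prove this; its concluding remarks defer simultaneous control of several prescribed sets to a forthcoming paper. Your sketch of that extension is also not yet right: a five-level block has up to $2^5=32$ descendants, not $20$, so Lemma~\ref{lem:kms-recurrence} and the potential argument would have to be redone. As written, this is a gap, though a removable one.

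The repair is to drop the $P$-constraint and make your Step~3 set the only prescribed set. The paper marks one point $q_i$ on each green curve $\gamma_i$, neither an endpoint nor a crossing; a green endpoint works equally well. It takes $Q=\{q_1,\dots,q_n\}$ and $\rho=\sqrt\delta(m+n)$. Then $|Q|/\rho\le 1/\sqrt\delta$ and $|\mathcal R|=O(t/\delta)$, with Theorem~\ref{thm:mixed} exactly as stated. A green curve meeting $\Delta_R$ falls into one of three cases:
\begin{itemize}
\item $q_i\in V(R)\setminus b(R)$;
\item $q_i\in b(R)$;
\item otherwise, the curve enters $\Delta_R$ through a boundary vertex of $R$.
\end{itemize}
Each vertex lies on at most two curves, so at most $|(V(R)\setminus b(R))\cap Q|+3|b(R)|=O(\sqrt\delta(m+n))$ green curves meet $\Delta_R$.

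Your worry about $25$ components disappears by definition. If the outer face of $R$ is not a face of $G'$, it is itself a hole. Faces of $R$ that are faces of $G'$ are absorbed into $\Delta_R$. Hence each of the at most $24$ holes contributes at most one component.

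You also omit two small points. First, digon faces, which occur for $t\ge 2$, must be subdivided by an extra vertex before triangulating. Second, the case $\delta(m+n)^2<r_0$ must be handled separately by taking a tiny disk around one point of $P$; this is what the ceilings in items~2 and~3 are for.
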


\begin{proof}
Let $\mathcal B=\{\beta_1,\ldots,\beta_m\}, \mathcal G=\{\gamma_1,\ldots,\gamma_n\}$. We may assume that \(\delta(m+n)^2\ge r_0\), where \(r_0\) is the absolute
constant from Theorem~\ref{thm:mixed}. Otherwise the statement follows by taking \(\Delta'\) to be a sufficiently
small disk around one point of \(P\), chosen in general position.

We construct a plane graph \(G\) as follows. Place a vertex at each
intersection point of two curves in \(\mathcal B\cup\mathcal G\), and also at
the two endpoints of every curve. For each green curve \(\gamma_i\), choose
one additional point \(q_i\in\gamma_i\) that is neither an endpoint nor an
intersection point, and add \(q_i\) as a vertex. Let
\[
Q=\{q_1,\ldots,q_n\}.
\]
The edges of \(G\) are the arcs between consecutive vertices along the curves
in \(\mathcal B\cup\mathcal G\). Thus \(G\) is a plane multigraph contained
in \(\Delta\).   

Since \(\mathcal B\cup\mathcal G\) is \(t\)-intersecting, the number of
intersection points determined by its curves is at most \(t(m+n)^2\). We
refine the plane graph \(G\) inside \(\Delta\) as follows.  Faces and their
sizes are understood in the usual plane embedding of \(G\), with the size of a
face equal to the length of its boundary walk. We eliminate
faces of size \(2\) by, inside $\Delta$, placing one new vertex inside each such face and
joining it to the two boundary vertices. This adds \(O(t(m+n)^2)\) vertices.
We then triangulate all faces of size at least \(4\), again inside
\(\Delta\), and let \(G'\) be the resulting triangulated plane graph. Hence
\[
|V(G')|=O(t(m+n)^2).
\]

Apply Theorem~\ref{thm:mixed} to \(G'\), with the set \(Q\), and with
parameters
\[
r=\delta(m+n)^2,\qquad \rho=\sqrt\delta(m+n).
\]
As a result, we obtain a division \(\mathcal R\) of \(G'\) such that every region
\(R\in\mathcal R\) satisfies
\[
|V(R)|=O(\delta(m+n)^2),\qquad
|b(R)|=O(\sqrt\delta(m+n)),\qquad
|(V(R)\setminus b(R))\cap Q|=O(\sqrt\delta(m+n)),
\]
and has at most \(24\) holes. Moreover,
\[
|\mathcal R|
=
O\left({|V(G')|\over r}+{|Q|\over \rho}\right)
=
O\left({t\over\delta}\right).
\]

For each \(R\in\mathcal R\), let \(\Delta_R\subset \Delta\) be obtained from
the embedded edges of \(R\), together with \(f\cap\Delta\) for every face
\(f\) of \(R\) that is also a face of \(G'\). Since \(R\) is connected,
\(\Delta_R\) is path-connected. Since the regions cover all edges of \(G'\),
the sets \(\Delta_R\) with \(R\in\mathcal R\), cover all vertices of \(G'\), and
hence cover \(P\).  

We next bound the number of path-connected components of
\(\Delta\setminus\Delta_R\). Since \(\Delta_R\) contains the embedded graph
\(R\), each such component is contained in a single face of \(R\). The faces
of \(R\) that are also faces of \(G'\) contribute nothing, by the definition of
\(\Delta_R\). Hence every component of \(\Delta\setminus\Delta_R\) is contained
in \(\Delta\cap h\) for some hole \(h\) of \(R\). Since \(\Delta\) is a domain
and \(\partial h\subset\Delta\), each set \(\Delta\cap h\) is path-connected.
Thus each hole of \(R\) contributes at most one component of
\(\Delta\setminus\Delta_R\). As \(R\) has at most \(24\) holes,
\(\Delta\setminus\Delta_R\) has at most \(24\) path-connected components.

By the pigeonhole principle, there is a region \(R\in\mathcal R\) such that
\[
|P\cap\Delta_R|
\ge { |P| \over |\mathcal R|}
=
\Omega\left({\delta\over t}|P|\right).
\]
Fix this region \(R\).

We next bound the number of green curves that meet \(\Delta_R\). A curve in
\(\mathcal B\cup\mathcal G\) can enter or leave \(\Delta_R\) only through a
boundary vertex of \(R\). By general position, each such boundary vertex lies
on at most two curves. Hence the total number of entry and exit points of all
curves through \(\partial\Delta_R\) is
\[
O(|b(R)|)=O(\sqrt\delta(m+n)).
\]
We claim that
\[
\bigl|\{\gamma_i\in\mathcal G:\gamma_i\cap\Delta_R\ne\emptyset\}\bigr|
\le |(V(R)\setminus b(R))\cap Q|+3|b(R)|.
\]
Indeed, if \(q_i\in V(R)\setminus b(R)\), then \(\gamma_i\) is counted by the
first term. If \(q_i\in b(R)\), then it is counted by the boundary term.
Otherwise \(q_i\notin V(R)\). If \(\gamma_i\) meets \(\Delta_R\), then,
traversing \(\gamma_i\) from \(q_i\) to its first point of entry into
\(\Delta_R\), we meet \(\partial\Delta_R\). Thus \(\gamma_i\) enters
\(\Delta_R\) through a boundary vertex of \(R\). By general position, each
boundary vertex is incident to at most two green curves. This proves the
claim.  By the bounds on \(R\), at most $O(\sqrt\delta(m+n))$ green curves meet \(\Delta_R\).

We may choose a sufficiently small open neighborhood \(\Delta'\) of
\(\Delta_R\), with path-connected closure and
\(\operatorname{cl}(\Delta')\subset\Delta\), chosen so close to \(\Delta_R\)
that \(\Delta\setminus\operatorname{cl}(\Delta')\) has at most \(24\)
path-connected components.  We also choose it small enough so that every green curve
disjoint from \(\Delta_R\) is also disjoint from \(\operatorname{cl}(\Delta')\).
Finally, since $\mathcal B\cup \mathcal G$ is in general position, we can choose \(\partial\Delta'\) so that the number of crossing points
between \(\partial\Delta'\) and the curves in $\mathcal B\cup \mathcal G$ is at most four times the
number of boundary vertices of \(R\). Therefore, the number of crossing points between the curves in $\mathcal B\cup\mathcal G$ and $\partial\Delta'$ is at most $O(\sqrt\delta(m+n))$.  Moreover, 

\[
|P\cap\Delta'|
\ge |P\cap\Delta_R|
\ge \Omega\left({\delta\over t}|P|\right).
\]
Also, by the choice of \(\Delta'\), at most \(O(\sqrt\delta(m+n))\) green
curves intersect \(\operatorname{cl}(\Delta')\).

\end{proof}

Next, we use Theorem~\ref{thm:red-blue-region} to establish a double-grounded-type structure for two collections of curves in the plane.  Given a curve \(\gamma\) in the plane, we arbitrarily designate one endpoint of \(\gamma\) as its first endpoint and the other as its second endpoint.

\medskip

\begin{theorem}\label{thm:doublegrounded}
For every \(t\ge 1\), there exists \(\varepsilon_1=\varepsilon_1(t)>0\) such that the following holds. Let \(\mathcal B\) be a set of \(n\) blue curves and let \(\mathcal G\) be a set of \(n\) green curves in the plane, such that \(\mathcal B\cup\mathcal G\) is $t$-intersecting and in general position.

Then there exist subsets \(\mathcal B'\subseteq \mathcal B\) and \(\mathcal G'\subseteq \mathcal G\), with
\[
|\mathcal B'|,|\mathcal G'|\ge \varepsilon_1 n,
\]
and domains
\[
\Delta_{b_1},\Delta_{b_2},\Delta_{g_1},\Delta_{g_2}\subseteq \mathbb R^2,
\]
such that each curve in \(\mathcal B'\) has its first endpoint in \(\Delta_{b_1}\) and its second endpoint in \(\Delta_{b_2}\), while each curve in \(\mathcal G'\) has its first endpoint in \(\Delta_{g_1}\) and its second endpoint in \(\Delta_{g_2}\). Moreover, every curve in \(\mathcal B'\) avoids \(\Delta_{g_1}\cup\Delta_{g_2}\), every curve in \(\mathcal G'\) avoids \(\Delta_{b_1}\cup\Delta_{b_2}\), and
\[
(\Delta_{b_1}\cup\Delta_{b_2})\cap(\Delta_{g_1}\cup\Delta_{g_2})=\emptyset .
\]
\end{theorem}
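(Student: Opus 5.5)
We would obtain the four domains by applying Theorem~\ref{thm:red-blue-region} four times in succession, alternating colours and endpoints. At each application the new domain is separated from the curves of the other colour that survive, and we then pass to a large subfamily.

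\textbf{Step 1: the domain \(\Delta_{b_1}\).} Apply Theorem~\ref{thm:red-blue-region} in \(\Delta=\mathbb R^2\), with \(P\) the set of first endpoints of \(\mathcal B\) and a small constant \(\delta=\delta(t)\) to be fixed later. We get a domain \(\Delta^1\) containing \(\Omega(\delta n/t)\) first endpoints of blue curves. At most \(O(\sqrt\delta\, n)\) green curves meet \(\operatorname{cl}(\Delta^1)\), and the boundary \(\partial\Delta^1\) has \(O(\sqrt\delta\, n)\) crossings with all curves. Choose \(\delta\) so small that \(C\sqrt\delta\le 1/10\). Deleting the green curves that meet \(\operatorname{cl}(\Delta^1)\) leaves at least \(0.9n\) green curves, all avoiding \(\Delta^1\). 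Set \(\Delta_{b_1}=\Delta^1\) and let \(\mathcal B_1\) be the blue curves with first endpoint in \(\Delta^1\).

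\textbf{Step 2: the domain \(\Delta_{g_1}\).} Swap the roles of the colours: green now plays the role of the point-carrying family and blue the family to be excluded. We do not apply the theorem to all of \(\mathcal B\), only to \(\mathcal B_1\) together with the surviving greens. We work inside the domain \(\mathbb R^2\setminus\operatorname{cl}(\Delta^1)\), or rather inside one of its at most \(24\) complementary components. All surviving green curves lie there, and by pigeonhole one component contains a constant fraction of them. With \(P\) the first endpoints of these greens, we obtain \(\Delta^2\) with \(\operatorname{cl}(\Delta^2)\) inside that component. Hence \(\Delta^2\) is automatically disjoint from \(\Delta^1\). Only \(O(\sqrt{\delta'}(|\mathcal B_1|+n))\) blue curves of \(\mathcal B_1\) meet \(\operatorname{cl}(\Delta^2)\).

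\textbf{The main obstacle: the balance of sizes.} \(|\mathcal B_1|\) is only \(\Theta(\delta n)\), while the excluded quantity is of order \(\sqrt{\delta'}\,n\). Taking \(\delta'\ll\delta^2\) makes \(\sqrt{\delta'}\,n\le |\mathcal B_1|/10\), so most of \(\mathcal B_1\) survives. The constants shrink at each stage, but there are only four stages, so \(\varepsilon_1\) depends only on \(t\).

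A subtlety is that a blue curve with an endpoint in \(\Delta^1\) may leave \(\Delta^1\) and cross into the green region. This is harmless, since we only discard those that meet \(\operatorname{cl}(\Delta^2)\). To keep the domains inside the correct complementary component, we work with the ambient domain "current region minus the closures so far". Item~4 of Theorem~\ref{thm:red-blue-region} bounds the number of components at each stage by \(24\), so the pigeonhole loss stays a constant fraction.

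\textbf{Steps 3 and 4: the second endpoints.} We need a domain \(\Delta_{b_2}\) for the second endpoints of the remaining blues, avoiding all previous domains and the remaining greens, and then symmetrically \(\Delta_{g_2}\). The second endpoint of a blue curve cannot lie in \(\Delta_{g_1}\), because the curve avoids \(\Delta_{g_1}\). It may, however, lie in \(\Delta_{b_1}\).

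To handle this, we first restrict \(\mathcal B_1\) at Step 1 using a halving trick: split each blue curve's endpoints randomly into first and second, or simply shrink \(\Delta^1\). Alternatively, we apply Step 3 inside the complement of \(\operatorname{cl}(\Delta^1\cup\Delta^2)\) after discarding blues whose second endpoint lies in \(\Delta^1\). If more than half of \(\mathcal B_1\) have both endpoints in \(\Delta^1\), we instead restart the argument inside \(\Delta^1\), where the green family is essentially absent. Each restart loses a constant factor, and the process cannot recur too many times because \(\Delta^1\) shrinks in endpoint count. We expect this case analysis of the second endpoints to be the fiddliest part.

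With each application we choose \(\delta_{i+1}\ll\delta_i^2\). The final families \(\mathcal B',\mathcal G'\) have size \(\varepsilon_1 n\) with \(\varepsilon_1=\varepsilon_1(t)\), and the pairwise disjointness of the four domains together with the avoidance properties follows from the construction.
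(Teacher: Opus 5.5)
There is a genuine gap in the step you yourself flag as the fiddliest: the second endpoints. You write that $\Delta_{b_2}$ must avoid ``all previous domains''. But the statement never asks for $\Delta_{b_1}\cap\Delta_{b_2}=\emptyset$ or $\Delta_{g_1}\cap\Delta_{g_2}=\emptyset$; only the blue domains must be disjoint from the green ones. So the case where at least half of the surviving blue curves have their second endpoint in $\Delta_{b_1}$ is trivial: set $\Delta_{b_2}=\Delta_{b_1}$.

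Your proposed treatment of this case does not work. Restarting the argument inside $\Delta^1$ cannot yield $\Omega(n)$ green curves, since by construction only $O(\sqrt\delta\,n)$ green curves meet $\operatorname{cl}(\Delta^1)$. If the restart instead means re-running Step~1 inside $\Delta^1$ until first and second endpoints separate, nothing in Theorem~\ref{thm:red-blue-region} forces that separation (think of very short blue arcs). Moreover, ``$\Delta^1$ shrinks in endpoint count'' only bounds the number of rounds by $O(\log n)$, with a constant-factor loss each round, so $\varepsilon_1$ would depend on $n$.

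The paper avoids all this by treating both blue endpoints first, while the green family is still of linear size, using a single parameter $\delta_1$. Either half of $\mathcal B_1$ have their second endpoint in $\Delta_{b_1}$, and $\Delta_{b_2}=\Delta_{b_1}$. Or at least half have it outside $\operatorname{cl}(\Delta_{b_1})$; then a $1/48$ fraction have it in one of the at most $24$ components $\Delta^{(1)}$ of $\mathbb R^2\setminus\operatorname{cl}(\Delta_{b_1})$, and the theorem is applied inside $\Delta^{(1)}$.

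This nesting, not item~4 alone, is what bounds the number of components of $\mathbb R^2\setminus\operatorname{cl}(\Delta_{b_1}\cup\Delta_{b_2})$ by $48$. One component $\Delta^{(2)}$ then holds a constant fraction of the greens. Both green domains are found inside $\Delta^{(2)}$ with $\delta_2\ll c_3$ and no case split, since they may overlap and $\Delta^{(2)}$ is automatically disjoint from the blue domains.

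Your interleaved order can be repaired in the same way. You also omit a step the paper uses every time: Theorem~\ref{thm:red-blue-region} assumes all curves lie in the ambient domain. So whenever you work inside a complementary component (e.g.\ with $\mathcal B_1$ in Step~2), you must do three things:
\begin{enumerate}
\item cut the curves not contained in it at its boundary;
\item use item~3 to see that there are only $O(n)$ pieces;
\item discard a curve if any of its pieces meets the new closure.
\end{enumerate}
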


\begin{proof}
Let \(\delta_1=\delta_1(t)>0\) be a sufficiently small constant to be chosen later.
All implicit constants in this proof are absolute or depend only on \(t\). We
choose all domains in general position, so that their boundaries avoid all
curve endpoints and all curves properly cross the boundaries.

We start by applying Theorem~\ref{thm:red-blue-region} in the domain
\(\mathbb R^2\), with parameter \(\delta_1\), blue family \(\mathcal B\),
green family \(\mathcal G\), and \(P\) equal to the set of first endpoints
of the curves in \(\mathcal B\).  We obtain a domain \(\Delta_{b_1}\) such that at least
\(c_1n\) curves in \(\mathcal B\) have their first endpoint in
\(\Delta_{b_1}\), where \(c_1=c_1(t)>0\), and such that at most
\(O(\sqrt\delta_1 n)\) curves in \(\mathcal G\) meet
\(\operatorname{cl}(\Delta_{b_1})\). Let \(\mathcal B_1\) be the set of blue
curves whose first endpoint lies in \(\Delta_{b_1}\), and let
\(\mathcal G_0\) be the set of green curves disjoint from
\(\operatorname{cl}(\Delta_{b_1})\). Choosing \(\delta_1>0\) sufficiently
small, we have
\[
|\mathcal B_1|\ge c_1n,\qquad |\mathcal G_0|\ge n/2.
\]

We now find a domain for the second endpoints of many curves in
\(\mathcal B_1\). There are two cases.

\medskip
\noindent\emph{Case 1.}
Suppose that at least \(|\mathcal B_1|/2\) curves in \(\mathcal B_1\) have
their second endpoint in \(\Delta_{b_1}\). Let \(\mathcal B_2\) be this
subfamily, set $\Delta_{b_2}=\Delta_{b_1},$ and let \(\mathcal G_2=\mathcal G_0\). Then every curve in \(\mathcal B_2\)
has its first endpoint in \(\Delta_{b_1}\) and its second endpoint in
\(\Delta_{b_2}\), every curve in \(\mathcal G_2\) avoids
\(\Delta_{b_1}\cup\Delta_{b_2}\), and
\[
|\mathcal B_2|\ge {c_1\over 2}n,\qquad |\mathcal G_2|\ge n/2.
\]

\medskip
\noindent\emph{Case 2.}
Suppose that at least \(|\mathcal B_1|/2\) curves in \(\mathcal B_1\) have
their second endpoint outside \(\operatorname{cl}(\Delta_{b_1})\). By
Theorem~\ref{thm:red-blue-region}, $\mathbb R^2\setminus\operatorname{cl}(\Delta_{b_1})$ 
has at most \(24\) path-connected components. Hence one of these components,
call it \(\Delta^{(1)}\), contains the second endpoints of at least
\(|\mathcal B_1|/48\) curves in \(\mathcal B_1\). Let
\(\mathcal B_1^*\) be this subfamily, and let \(Q_{b_2}\) be the set of
their second endpoints.

Let \(\widetilde{\mathcal B}_1\) be the family of subarcs obtained by cutting
the curves in \(\mathcal B_1\) at their intersection points with
\(\partial\Delta^{(1)}\), and keeping the subarcs that lie in
\(\Delta^{(1)}\). If necessary, shorten these subarcs slightly so that they
are contained in \(\Delta^{(1)}\). Since $\partial\Delta^{(1)}\subseteq\partial\Delta_{b_1}$ and Theorem~\ref{thm:red-blue-region} gives \(O(\sqrt\delta_1 n)\) crossings
between the curves in \(\mathcal B\cup\mathcal G\) and \(\partial\Delta_{b_1}\), we have $
|\widetilde{\mathcal B}_1|=O(n).$  Moreover, each point of \(Q_{b_2}\) is an endpoint of one curve in
\(\widetilde{\mathcal B}_1\). By general position, after an arbitrarily small
perturbation if necessary, \(\widetilde{\mathcal B}_1\) together with the
green curves contained in \(\Delta^{(1)}\) is still \(t\)-intersecting and in
general position.

Let \(\mathcal G^{(1)}\subseteq\mathcal G_0\) be the set of green curves
contained in \(\Delta^{(1)}\). Apply Theorem~\ref{thm:red-blue-region} in the domain
\(\Delta^{(1)}\), with parameter \(\delta_1\), blue family
\(\widetilde{\mathcal B}_1\), green family \(\mathcal G^{(1)}\), and
\(P=Q_{b_2}\). We obtain a domain \(\Delta_{b_2}\subseteq\Delta^{(1)}\) such
that at least \(c_2n\) curves in \(\mathcal B_1^*\) have their second
endpoint in \(\Delta_{b_2}\), where \(c_2=c_2(t)>0\), and such that at most
\[
O\bigl(\sqrt\delta_1(|\widetilde{\mathcal B}_1|+|\mathcal G^{(1)}|)\bigr)
=O(\sqrt\delta_1 n)
\]
curves in \(\mathcal G^{(1)}\) meet \(\operatorname{cl}(\Delta_{b_2})\).
Let \(\mathcal B_2\subseteq\mathcal B_1^*\) be the corresponding blue
subfamily. Let \(\mathcal G_2\) be the set of curves in \(\mathcal G_0\)
disjoint from \(\operatorname{cl}(\Delta_{b_2})\). The only curves of
\(\mathcal G_0\) that can meet \(\operatorname{cl}(\Delta_{b_2})\) are those
contained in \(\Delta^{(1)}\), since
\(\operatorname{cl}(\Delta_{b_2})\subset\Delta^{(1)}\). Hence, choosing
\(\delta_1>0\) sufficiently small,
\[
|\mathcal B_2|\ge c_2n,\qquad |\mathcal G_2|\ge n/3.
\]
Every curve in \(\mathcal B_2\) has its first endpoint in \(\Delta_{b_1}\)
and its second endpoint in \(\Delta_{b_2}\), and every curve in
\(\mathcal G_2\) avoids \(\Delta_{b_1}\cup\Delta_{b_2}\).

\medskip
In either case, for a constant $c_3  = c_3(t)$ sufficiently small, we have obtained
subfamilies \(\mathcal B_2\subseteq\mathcal B\) and
\(\mathcal G_2\subseteq\mathcal G\), with $|\mathcal B_2|,|\mathcal G_2|\ge c_3n,$ 
and domains \(\Delta_{b_1},\Delta_{b_2}\), such that every curve in
\(\mathcal B_2\) has its first endpoint in \(\Delta_{b_1}\) and its second
endpoint in \(\Delta_{b_2}\), and every curve in \(\mathcal G_2\) avoids
\(\Delta_{b_1}\cup\Delta_{b_2}\).

We now pass to one component containing many green curves. In Case 1,
\[
\mathbb R^2\setminus\operatorname{cl}(\Delta_{b_1})
=
\mathbb R^2\setminus\operatorname{cl}(\Delta_{b_1}\cup\Delta_{b_2})
\]
has at most \(24\) path-connected components. In Case 2,
\(\mathbb R^2\setminus\operatorname{cl}(\Delta_{b_1})\) has at most \(24\)
components, and Theorem~\ref{thm:red-blue-region}, applied inside
\(\Delta^{(1)}\), gives that $\Delta^{(1)}\setminus\operatorname{cl}(\Delta_{b_2})$ has at most \(24\) path-connected components. Therefore, in either case, $\mathbb R^2\setminus
\operatorname{cl}(\Delta_{b_1}\cup\Delta_{b_2})$ has at most 48 path-connected components. Since
every curve in \(\mathcal G_2\) is connected and avoids
\(\operatorname{cl}(\Delta_{b_1}\cup\Delta_{b_2})\), one such component
contains a constant fraction of the curves in \(\mathcal G_2\). Let this
component be \(\Delta^{(2)}\), and replace \(\mathcal G_2\) by the subfamily
contained in \(\Delta^{(2)}\). Since \(c_3\) is sufficiently small, we still have $
|\mathcal B_2|,|\mathcal G_2|\ge c_3n,$ every curve in \(\mathcal G_2\) is contained in the domain \(\Delta^{(2)}\),
and
\[
\Delta^{(2)}\cap(\Delta_{b_1}\cup\Delta_{b_2})=\emptyset.
\]
Moreover, $\partial\Delta^{(2)}
\subseteq
\partial\Delta_{b_1}\cup\partial\Delta_{b_2}.$

We now work inside \(\Delta^{(2)}\) to ground the endpoints of many green
curves. At this point, the constant \(c_3=c_3(t)>0\) is fixed. Let \(\delta_2=\delta_2(t)>0\) be sufficiently small compared with \(c_3\).   Let \(\widetilde{\mathcal B}_2\) be the family of subarcs obtained by
cutting the curves in \(\mathcal B_2\) at their intersection points with
\(\partial\Delta^{(2)}\), and keeping the subarcs that lie in
\(\Delta^{(2)}\). If necessary, shorten these subarcs slightly so that they
are contained in \(\Delta^{(2)}\). By the
at most two applications of Theorem~\ref{thm:red-blue-region} used to choose
\(\Delta_{b_1}\) and \(\Delta_{b_2}\), the curves of
\(\mathcal B_2\) cross \(\partial\Delta^{(2)}\) only \(O(n)\) times in total.
Thus $|\widetilde{\mathcal B}_2|=O(n).$  After an arbitrarily small perturbation if necessary,
the family \(\mathcal G_2\cup\widetilde{\mathcal B}_2\) is still
\(t\)-intersecting and in general position.

Apply Theorem~\ref{thm:red-blue-region} in the domain
\(\Delta^{(2)}\), with parameter \(\delta_2\), blue family
\(\mathcal G_2\), green family \(\widetilde{\mathcal B}_2\), and \(P\)
equal to the set of first endpoints of the curves in \(\mathcal G_2\). We obtain a constant $c_4 = c_4(t)$ and a domain
\(\Delta_{g_1}\subseteq\Delta^{(2)}\) such that at least \(c_4n\) curves in
\(\mathcal G_2\) have their first endpoint in \(\Delta_{g_1}\), and such
that at most
\[
O\bigl(\sqrt\delta_2(|\mathcal G_2|+|\widetilde{\mathcal B}_2|)\bigr)
=O(\sqrt\delta_2 n)
\]
curves in \(\widetilde{\mathcal B}_2\) meet
\(\operatorname{cl}(\Delta_{g_1})\). Let \(\mathcal G_3\subseteq
\mathcal G_2\) be the corresponding green subfamily. Let
\(\mathcal B_3\subseteq\mathcal B_2\) be the set of blue curves none of whose
subarcs in \(\widetilde{\mathcal B}_2\) meets
\(\operatorname{cl}(\Delta_{g_1})\). Each discarded blue curve accounts for
at least one discarded subarc, so we discard only \(O(\sqrt\delta_2 n)\) blue
curves. Taking \(\delta_2\) sufficiently small compared with \(c_3\), we have
\[
|\mathcal G_3|\ge c_4n,\qquad |\mathcal B_3|\ge {c_3\over 2}n.
\]
Moreover, every curve in \(\mathcal B_3\) avoids \(\Delta_{g_1}\).

Finally, let \(\widetilde{\mathcal B}_3\) be the family of subarcs obtained
by cutting the curves in \(\mathcal B_3\) at their intersection points with
\(\partial\Delta^{(2)}\), and keeping the subarcs that lie in
\(\Delta^{(2)}\). As above, \(|\widetilde{\mathcal B}_3|=O(n)\), and
\(\mathcal G_3\cup\widetilde{\mathcal B}_3\) may be assumed to be
\(t\)-intersecting and in general position.

Apply Theorem~\ref{thm:red-blue-region} in the domain
\(\Delta^{(2)}\), with parameter \(\delta_2\), blue family
\(\mathcal G_3\), green family \(\widetilde{\mathcal B}_3\), and \(P\)
equal to the set of second endpoints of the curves in \(\mathcal G_3\). We obtain a domain
\(\Delta_{g_2}\subseteq\Delta^{(2)}\) such that at least \(c_5n\) curves in
\(\mathcal G_3\) have their second endpoint in \(\Delta_{g_2}\), and such
that at most
\[
O\bigl(\sqrt\delta_2(|\mathcal G_3|+|\widetilde{\mathcal B}_3|)\bigr)
=O(\sqrt\delta_2 n)
\]
curves in \(\widetilde{\mathcal B}_3\) meet
\(\operatorname{cl}(\Delta_{g_2})\). Let \(\mathcal G'\subseteq
\mathcal G_3\) be the corresponding green subfamily. Let
\(\mathcal B'\subseteq\mathcal B_3\) be the set of blue curves none of whose
subarcs in \(\widetilde{\mathcal B}_3\) meets
\(\operatorname{cl}(\Delta_{g_2})\). Again, taking \(\delta_2\) sufficiently small compared with \(c_3\), we have
\[
|\mathcal G'|\ge c_5n,\qquad |\mathcal B'|\ge c_6n
\]
for some \(c_5 = c_5(t), c_6=c_6(t)>0\).

Now every curve in \(\mathcal B'\subseteq\mathcal B_2\) has its first
endpoint in \(\Delta_{b_1}\) and its second endpoint in \(\Delta_{b_2}\).
Every curve in \(\mathcal G'\) has its first endpoint in \(\Delta_{g_1}\)
and its second endpoint in \(\Delta_{g_2}\). Since
\[
\Delta_{g_1},\Delta_{g_2}\subseteq\Delta^{(2)}
\qquad\text{and}\qquad
\Delta^{(2)}\cap(\Delta_{b_1}\cup\Delta_{b_2})=\emptyset,
\]
we have
\[
(\Delta_{b_1}\cup\Delta_{b_2})\cap
(\Delta_{g_1}\cup\Delta_{g_2})=\emptyset.
\]
Also, every curve in \(\mathcal G'\subseteq\mathcal G_2\) is contained in
\(\Delta^{(2)}\), and hence avoids \(\Delta_{b_1}\cup\Delta_{b_2}\). By
construction, every curve in \(\mathcal B'\) avoids both
\(\Delta_{g_1}\) and \(\Delta_{g_2}\). Setting $\varepsilon_1=\min\{c_5,c_6\},$ we get $|\mathcal B'|,|\mathcal G'|\ge \varepsilon_1 n.$
\end{proof}

We now prove \Cref{main} when the two families have the same size.

\begin{theorem}\label{mainbalance}
For every integer \(t \ge 1\), there exists a constant \(\varepsilon_2>0\) with
the following property. Let \(\mathcal B\) be a set of \(n\) blue curves, and
let \(\mathcal G\) be a set of \(n\) green curves in the plane such that
\(\mathcal B\cup \mathcal G\) is a collection of \(t\)-intersecting curves in
general position. Then there exist subsets
\(\mathcal B'\subseteq \mathcal B\) and
\(\mathcal G'\subseteq \mathcal G\), with
\[
|\mathcal B'|,|\mathcal G'|\ge \varepsilon_2 n,
\]
such that either every curve in \(\mathcal B'\) intersects every curve in
\(\mathcal G'\) an odd number of times, or every such pair intersects an even
number of times.
\end{theorem}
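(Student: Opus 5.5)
We first apply Theorem~\ref{thm:doublegrounded} to $\mathcal B$ and $\mathcal G$. This yields subfamilies $\mathcal B_1\subseteq\mathcal B$ and $\mathcal G_1\subseteq\mathcal G$ of size at least $\varepsilon_1 n$, together with pairwise-separated domains $\Delta_{b_1},\Delta_{b_2},\Delta_{g_1},\Delta_{g_2}$. The key observation is that the parity of $|\beta\cap\gamma|$ for $\beta\in\mathcal B_1$, $\gamma\in\mathcal G_1$ is a topological invariant that depends only on the ``homotopy type'' of $\beta$ relative to the green endpoints, and of $\gamma$ relative to the blue endpoints. We make this precise as follows. Fix a point $p_1\in\Delta_{g_1}$, a point $p_2\in\Delta_{g_2}$, and, for each $\gamma\in\mathcal G_1$, arcs $a_\gamma\subset\Delta_{g_1}$ from $p_1$ to the first endpoint of $\gamma$ and $a'_\gamma\subset\Delta_{g_2}$ from the second endpoint of $\gamma$ to $p_2$; these exist since domains are path-connected. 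Extending $\gamma$ by these arcs gives a curve $\hat\gamma$ from $p_1$ to $p_2$. Since no curve of $\mathcal B_1$ meets $\Delta_{g_1}\cup\Delta_{g_2}$, we have $|\beta\cap\gamma|=|\beta\cap\hat\gamma|$ for every $\beta\in\mathcal B_1$. We perform the same extension for the blue curves, with base points $q_1\in\Delta_{b_1}$ and $q_2\in\Delta_{b_2}$, to obtain $\hat\beta$. Since the green curves avoid $\Delta_{b_1}\cup\Delta_{b_2}$ and the added green arcs lie in $\Delta_{g_i}$, which are disjoint from $\Delta_{b_i}$, we still have $|\hat\beta\cap\hat\gamma|\equiv|\beta\cap\gamma|$, and in fact the counts agree exactly. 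The extended curves may self-intersect and may no longer be simple, so we work with parities in general position; this is all that is needed.

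Next we reduce the parity to a product structure. Fix reference curves $\beta_0\in\mathcal B_1$ and $\gamma_0\in\mathcal G_1$. For any $\beta,\beta'$, the concatenation $\hat\beta\cdot\hat\beta'^{-1}$ is a closed curve through $q_1$ and $q_2$, and for a closed curve $\sigma$ and a curve $\hat\gamma$ whose endpoints $p_1,p_2$ avoid $\sigma$, the parity of $|\sigma\cap\hat\gamma|$ equals the mod-$2$ winding difference of $\sigma$ around $p_1$ and around $p_2$. This quantity is independent of $\gamma$. Hence
\[
|\hat\beta\cap\hat\gamma|\equiv |\hat\beta_0\cap\hat\gamma|+f(\beta)\pmod 2,
\]
where $f(\beta)$ is the winding parity of $\hat\beta\cdot\hat\beta_0^{-1}$ around $p_1$ minus that around $p_2$. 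One must check that $\hat\beta_0\cdot\hat\beta^{-1}$ avoids $p_1$ and $p_2$; this holds because the blue curves and their extension arcs avoid $\Delta_{g_1}\cup\Delta_{g_2}$. Symmetrically,
\[
|\hat\beta_0\cap\hat\gamma|\equiv|\hat\beta_0\cap\hat\gamma_0|+g(\gamma)\pmod 2,
\]
where $g(\gamma)$ is the corresponding winding parity of $\hat\gamma\cdot\hat\gamma_0^{-1}$ around $q_1$ and $q_2$. Therefore the parity of $|\beta\cap\gamma|$ equals $c+f(\beta)+g(\gamma)$ modulo $2$ for a constant $c$.

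Finally, by pigeonhole, at least half of $\mathcal B_1$ has a common value of $f$, and at least half of $\mathcal G_1$ has a common value of $g$. On these halves all pairs have the same parity, so the statement holds with $\varepsilon_2=\varepsilon_1/2$.

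We expect the main obstacle to be the winding-number parity identity: we must verify that the extension arcs introduce no unwanted crossings and that the mod-$2$ intersection number of a closed curve with an arc equals the sum of the winding parities at the arc's endpoints. The cleanest route is a standard mod-$2$ argument. Perturb everything into general position; then, as a point moves along $\hat\gamma$ from $p_1$ to $p_2$, the winding number of $\sigma$ changes by $\pm1$ at each crossing with $\sigma$. This works even when $\sigma$ is not simple, which is exactly why the statement is only about parities. The $t$-intersecting hypothesis is used only through Theorem~\ref{thm:doublegrounded}.
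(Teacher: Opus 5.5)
Your proposal is correct and is essentially the paper's argument: after applying Theorem~\ref{thm:doublegrounded}, the paper likewise joins curves through the grounding domains into closed curves and uses the evenness of crossings of closed curves to write the parity as $f_1(\beta)+f_2(\gamma)$, then pigeonholes. The only difference is cosmetic. You anchor all extended curves at common base points and use the closed-curve-versus-arc winding-parity identity, while the paper closes up both pairs $\beta,\beta'$ and $\gamma,\gamma'$ and derives the four-term identity $\chi(\beta,\gamma)+\chi(\beta',\gamma)+\chi(\beta,\gamma')+\chi(\beta',\gamma')\equiv 0\pmod 2$. (Also, the theorem gives only that the blue and green domains are disjoint from each other, not that all four are pairwise separated; indeed $\Delta_{b_1}=\Delta_{b_2}$ can occur. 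Your argument only uses the blue--green disjointness, so this does not matter.)
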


\begin{proof}

Let $\mathcal B=\{\beta_1,\ldots,\beta_n\}, \mathcal G=\{\gamma_1,\ldots,\gamma_n\}.$   We start by applying Theorem \ref{thm:doublegrounded} to $\mathcal B$ and $\mathcal G$, and obtain subsets \(\mathcal{B}'\subseteq \mathcal{B}\) and \(\mathcal{G}'\subseteq \mathcal{G}\), together with domains
\[
\Delta_{b_1},\Delta_{b_2},\Delta_{g_1},\Delta_{g_2}\subseteq \mathbb{R}^2,
\]
such that
\[
|\mathcal{B}'|\ge \varepsilon_1|\mathcal{B}|
\qquad\text{and}\qquad
|\mathcal{G}'|\ge \varepsilon_1|\mathcal{G}|,
\]
where $\varepsilon_1 = \varepsilon_1(t).$  Moreover, each curve in \(\mathcal{B}'\) has its first endpoint in \(\Delta_{b_1}\) and its second endpoint in \(\Delta_{b_2}\), while each curve in \(\mathcal{G}'\) has its first endpoint in \(\Delta_{g_1}\) and its second endpoint in \(\Delta_{g_2}\). Furthermore, every blue curve in \(\mathcal{B}'\) avoids \(\Delta_{g_1}\cup\Delta_{g_2}\), every green curve in \(\mathcal{G}'\) avoids \(\Delta_{b_1}\cup\Delta_{b_2}\), and $(\Delta_{b_1}\cup\Delta_{b_2}) \cap (\Delta_{g_1}\cup\Delta_{g_2}) = \emptyset$.

For \(\beta\in\mathcal B'\) and \(\gamma\in\mathcal G'\), let \(\chi(\beta,\gamma)\) be the number of intersections between \(\beta\) and \(\gamma\).  Clearly, all such intersection points lie outside of $\Delta_{b_1}\cup\Delta_{b_2} \cup \Delta_{g_1}\cup\Delta_{g_2}$.

\medskip

\begin{claim}
For any \(\beta,\beta'\in\mathcal B'\) and \(\gamma,\gamma'\in\mathcal G'\),

\[\chi(\beta,\gamma)+\chi(\beta',\gamma)+\chi(\beta,\gamma')+\chi(\beta',\gamma') \equiv 0 \pmod 2.\]
\end{claim}

\begin{proof}
In the following proof, we may assume that $\beta\neq \beta'$ and $\gamma \neq \gamma'$ since otherwise the claim is immediate. Given $\beta$ and $\beta'$, we create a closed curve $\beta''$ as follows.  Since both first endpoints of $\beta$ and $\beta'$ lie in $\Delta_{b_1}$, and $\Delta_{b_1}$ is a domain, there is an arc that connects these endpoints that lies in $\Delta_{b_1}$.  Likewise, since both second endpoints of \(\beta\) and \(\beta'\) lie in \(\Delta_{b_2}\), and \(\Delta_{b_2}\) is a domain, there is an arc joining these endpoints inside \(\Delta_{b_2}\).   As a result, we obtain a closed curve $\beta''$ in the plane, not necessarily simple.  Since $\Delta_{b_1}$ and $\Delta_{b_2}$ are domains, we can assume that $\beta''$ has a finite number of self-intersections, and no self tangencies.

We repeat the same procedure above for $\gamma$ and $\gamma'$, to obtain a
closed curve $\gamma''$. Again, we can assume that $\gamma''$ has a finite
number of self-intersections, and no self tangencies. Since every curve in
\(\mathcal B'\) avoids \(\Delta_{g_1}\cup\Delta_{g_2}\), every curve in
\(\mathcal G'\) avoids \(\Delta_{b_1}\cup\Delta_{b_2}\), and
\[
(\Delta_{b_1}\cup\Delta_{b_2})\cap
(\Delta_{g_1}\cup\Delta_{g_2})=\emptyset,
\]
the crossings between \(\beta''\) and \(\gamma''\) are exactly the crossings
between one of \(\beta,\beta'\) and one of \(\gamma,\gamma'\). Therefore
\[
|\beta''\cap\gamma''|
=
\chi(\beta,\gamma)+\chi(\beta',\gamma)+\chi(\beta,\gamma')
+\chi(\beta',\gamma').
\]
Since two closed curves in the plane properly cross an even number of times,
the summation above is even.\end{proof}

Now fix \(\beta_0\in\mathcal B'\) and \(\gamma_0\in\mathcal G'\). Define functions \(f_1:\mathcal B'\to  \{0,1\}\) and \(f_2:\mathcal G'\to\{0,1\}\) by
\[
f_1(\beta)\equiv \chi(\beta,\gamma_0)\pmod 2
\]
and
\[
f_2(\gamma)\equiv \chi(\beta_0,\gamma_0)+\chi(\beta_0,\gamma)\pmod 2.
\]
By the claim, we know that 
\[
\chi(\beta,\gamma) \equiv f_1(\beta)+f_2(\gamma) \pmod 2. 
\]
Partition 
\[
\mathcal{B}'=\mathcal{B}'_0 \cup \mathcal{B}'_1
\qquad\text{and}\qquad
\mathcal{G}'=\mathcal{G}'_0 \cup \mathcal{G}'_1
\]
where 
\[
\mathcal{B}'_i:=\{\beta \in \mathcal{B}': f_1(\beta)\equiv i (\text{mod } 2)\}
\qquad\text{and}\qquad
\mathcal{G}'_j:=\{\gamma \in \mathcal{G}': f_2(\gamma)\equiv j (\text{mod } 2)\}. 
\]

Thus, for \(\beta\in\mathcal B'_i\) and \(\gamma\in\mathcal G'_j\), the parity of \(\chi(\beta,\gamma)\) is \(i+j\pmod 2\).   By the pigeonhole principle, for some pair \(i,j\in\{0,1\}\), we have $|\mathcal{B}'_i|=\Omega(\varepsilon_1|\mathcal B|)$ and $|\mathcal{G}'_j| =\Omega (\varepsilon_1|\mathcal G|)$, and either every blue curve in $\mathcal{B}'_i$ crosses every green curve in $\mathcal{G}'_j$ an odd number of times, or every blue curve in $\mathcal{B}'_i$ crosses every green curve in $\mathcal{G}'_j$ an even number of times.  By setting $\varepsilon_2 = \Theta(\varepsilon_1)$ sufficiently small, the statement follows.\end{proof}

\noindent We now prove Theorem \ref{main}.

\begin{proof}[Proof of Theorem \ref{main}]

Without loss of generality, we can assume that $|\mathcal B| \leq |\mathcal G|$.  Let \(\mathcal B\) be a family of \(m\) blue curves and let
\(\mathcal G\) be a family of \(n\) green curves, where \(m\leq n\), such that
\(\mathcal B\cup\mathcal G\) is a collection of \(t\)-intersecting curves in
general position. We show that the conclusion holds with
\(\varepsilon=\varepsilon_2/2\).

For each \(\beta\in\mathcal B\), replace \(\beta\) by either $\left\lfloor \frac{n}{m}\right\rfloor$ or $\left\lceil \frac{n}{m}\right\rceil$ copies drawn in a sufficiently small neighborhood of \(\beta\), choosing the numbers of copies
so that the resulting blue family \(\widetilde{\mathcal B}\) has exactly
\(n\) members. The copies may be chosen so that
\(\widetilde{\mathcal B}\cup\mathcal G\) is still a collection of
\(t\)-intersecting curves in general position and every copy of \(\beta\)
intersects each \(\gamma\in\mathcal G\) with the same number of times as \(\beta\).

Apply Theorem~\ref{mainbalance} to
\(\widetilde{\mathcal B}\) and \(\mathcal G\). We obtain subfamilies
\(\widetilde{\mathcal B}'\subseteq\widetilde{\mathcal B}\) and
\(\mathcal G'\subseteq\mathcal G\) such that
\[
|\widetilde{\mathcal B}'|,|\mathcal G'|\ge \varepsilon_2 n,
\]
and all pairs in
\(\widetilde{\mathcal B}'\times\mathcal G'\) have the same intersection
parity.

Let \(\mathcal B'\subseteq\mathcal B\) be the set of original blue curves
having at least one copy in \(\widetilde{\mathcal B}'\). Since each original
blue curve has at most \(\lceil n/m\rceil\) copies,
\[
|\mathcal B'|
\ge
\frac{|\widetilde{\mathcal B}'|}{\lceil n/m\rceil}
\ge
\frac{\varepsilon_2 n}{\lceil n/m\rceil} \geq \frac{\varepsilon_2}{2}m.
\]
Moreover, $|\mathcal G'|\ge  \frac{\varepsilon_2}{2}n.$  By the construction of the copies, either every pair in
\(\mathcal B'\times\mathcal G'\) intersects an even number of times, or every
such pair intersects an odd number of times.    
\end{proof}

\section{Topological graphs with no $k$-pairwise odd crossing edges}\label{sec:top}

In this section, we use Theorem \ref{thm:dense} to prove Theorem \ref{oddcross}.  For a graph \(G=(V,E)\), the \emph{bisection width} \(b(G)\) is the minimum number of edges whose removal partitions \(V\) into two disjoint sets \(V_1\cup V_2\) such that
\[
\frac{1}{3}|V|\le |V_1|,|V_2|\le \frac{2}{3}|V|.
\]
Equivalently, \(b(G)\) is the minimum number of edges crossing between two parts of such a balanced partition.

The \emph{odd crossing number} of a graph \(G\), denoted by \(\operatorname{odd\text{-}cr}(G)\), is the minimum number of pairs of edges that cross an odd number of times, taken over all drawings of \(G\) in the plane.  We will need the following result due to Pach and T\'oth.

\begin{lemma}[\cite{PTdisjoint}]\label{lem:mat}
There is an absolute constant \(c_1>0\) such that every graph \(G\) with \(n\) vertices satisfies
\[
b(G)\le c_1\log n \sqrt{\operatorname{odd\text{-}cr}(G)+\sum_{v\in V(G)}d(v)^2},
\]
where \(d(v)\) denotes the degree of the vertex \(v\).
\end{lemma}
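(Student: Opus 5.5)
Since Lemma~\ref{lem:mat} is quoted from Pach and T\'oth~\cite{PTdisjoint}, we only sketch how we would reprove it. The plan follows the classical route for bounding bisection width by crossing number. The new ingredient is a Hanani--Tutte type redrawing, which turns \emph{parity} information into actual crossing-freeness.

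\emph{Redrawing.} Fix a drawing $D$ of $G$ in which exactly $k=\operatorname{odd\text{-}cr}(G)$ pairs of edges cross an odd number of times. Let $F$ be the set of edges lying in at least one such pair, so $|F|\le 2k$. We then want a drawing $D'$ of $G$ whose crossings are controlled by the odd pairs, in one of the following two senses:
\begin{itemize}
\item edges of $E\setminus F$ are crossing-free in $D'$;
\item more strongly, each pair crossing in $D'$ is one of the $k$ odd pairs of $D$, crossing at most once, after local rerouting near the vertices.
\end{itemize}
To get this, we would use the Pelsmajer--Schaefer--\v{S}tefankovi\v{c} strengthening of Hanani--Tutte: even pairs can be uncrossed by finger moves and vertex--edge switches, which change only parities of pairs involving switched edges. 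The bookkeeping has to ensure that no new odd pairs are created. We expect this to be the main obstacle, since naive switching can make previously even pairs odd. If it fails in full strength, a fallback is to cut one edge from each of many odd pairs and charge these deletions to the budget $\sqrt{k}$. That charging only works after a first separator has reduced the problem, so the fallback is weaker.

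\emph{Planarization.} Once $D'$ has at most $O(k)$ crossings, we planarize it by placing a vertex of degree $4$ at each crossing. This gives a planar graph $H$ with $N\le n+O(k)$ vertices and
\[
\sum_{v\in V(H)} d_H(v)^2\le \sum_{v\in V(G)} d(v)^2+O(k).
\]
We then apply the edge-separator version of Theorem~\ref{cycle}, due to Gazit and Miller. A planar graph with vertex weights has an edge set of size $O\bigl(\sqrt{\sum d_H(v)^2}\bigr)$ whose removal leaves every component of weight at most $2/3$ of the total. We put weight $1$ on the original vertices of $G$ and weight $0$ on crossing vertices. Each separator edge of $H$ lies on a unique edge of $G$, so deleting the corresponding edges of $G$ costs
\[
O\Bigl(\sqrt{\operatorname{odd\text{-}cr}(G)+\textstyle\sum_v d(v)^2}\Bigr)
\]
per application.

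\emph{Balancing.} A single separator gives components of weight at most $2n/3$, but not a bisection into two sides each between $n/3$ and $2n/3$. We therefore iterate in the standard way. We repeatedly separate the heaviest remaining piece and assign pieces greedily to the two sides. Each round shrinks the unassigned heavy piece geometrically, so $O(\log n)$ rounds suffice. Each round costs at most the bound above, since restricting to a subgraph does not increase the relevant quantities. Summing over the rounds gives $b(G)\le c_1\log n\,\sqrt{\operatorname{odd\text{-}cr}(G)+\sum_v d(v)^2}$, which is the claimed bound.
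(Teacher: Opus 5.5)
There is a genuine gap, and it is exactly the step you flag as the main obstacle. Your planarization and edge-separator steps are fine in themselves, but they need a drawing of $G$ with $O(k)$ crossings, $k=\operatorname{odd\text{-}cr}(G)$, and no such drawing is available.

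Your stronger bullet would give $\operatorname{cr}(G)\le\operatorname{odd\text{-}cr}(G)$ for every graph. This is false: Pelsmajer, Schaefer, and \v{S}tefankovi\v{c} constructed graphs with $\operatorname{odd\text{-}cr}(G)<\operatorname{cr}(G)$. Even the weaker claim that some drawing has $O(k)$ crossings is the statement $\operatorname{cr}(G)=O(\operatorname{odd\text{-}cr}(G))$, which is a well-known open problem; the classical bound is only $\operatorname{cr}(G)\le 2\operatorname{odd\text{-}cr}(G)^2$.

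What removing even crossings actually gives is your first bullet. Only the at most $2k$ edges of $F$ are crossed, but they may cross each other an even, nonzero number of times. After the usual uncrossing you can only guarantee about $\binom{2k}{2}$ crossings among them. Planarizing that drawing and taking an edge separator then gives $b(G)=O\bigl(k+\sqrt{\sum_v d(v)^2}\bigr)$. That is no better than combining $\operatorname{cr}\le 2\operatorname{odd\text{-}cr}^2$ with the Pach--Shahrokhi--Szegedy bound $b(G)=O\bigl(\sqrt{\operatorname{cr}(G)+\sum_v d(v)^2}\bigr)$, and it is far from $\log n\sqrt{k+\sum_v d(v)^2}$ when $k$ is large.

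The fallback does not repair this. Making the remaining pairs even means deleting a vertex cover of the graph of odd pairs. That cover has size $k$ when the $k$ odd pairs are pairwise disjoint, and nothing in your sketch charges it to $\sqrt k$. Also, your $\log n$ comes from a balancing iteration that is not needed: a single $2/3$-edge separator already yields a $1/3$--$2/3$ split by grouping components. So that $\log n$ cannot be what pays for working with parities.

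The paper does not prove this lemma; it imports it from Pach and T\'oth. The standard argument is due to Kolman and Matou\v{s}ek for the pair-crossing number and carries over to odd crossings. It never converts parity information into a drawing with few crossings. Roughly:
\begin{enumerate}
\item If $b(G)=b$, then $G$ has a subgraph $H$ on $m=\Omega(n)$ vertices with edge expansion $\Omega(b/n)$.
\item By Leighton--Rao, $H$ routes a uniform multicommodity flow of value $\Omega\bigl(b/(n^2\log n)\bigr)$ between every pair of vertices, with congestion at most $1$. This flow--cut gap is where the $\log n$ comes from.
\item Sampling one flow path per pair gives a drawing of $K_m$.
\item The parity of crossings between two such paths is the sum of the parities over pairs of their edges. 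So every oddly crossing pair of paths is charged either to an oddly crossing pair of edges of $G$, or to a vertex the two paths share; the latter produces the $\sum_v d(v)^2$ term.
\item Compare with the crossing lemma for odd crossings, $\operatorname{odd\text{-}cr}(K_m)=\Omega(m^4)$, which holds because $\operatorname{odd\text{-}cr}=0$ forces planarity by Hanani--Tutte. This yields $b=O\bigl(\log n\sqrt{\operatorname{odd\text{-}cr}(G)+\sum_v d(v)^2}\bigr)$.
\end{enumerate}
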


\begin{proof}[Proof of Theorem \ref{oddcross}]
Let \(f(n,k)\) denote the maximum number of edges in an \(n\)-vertex topological graph with no \(k\) edges that pairwise cross an odd number of times, and whose edge set is \(t\)-intersecting. We prove by double induction on \(n\) and \(k\) that
\begin{equation}\label{fnk}
    f(n,k)\le n(\log n)^{C\log k},
\end{equation}
where \(C=C(t)\) is a sufficiently large constant.

Let $n_0$ be a large constant that will be determined later.  The base case \(n\leq n_0\) is trivial by setting $C$ sufficiently large. The base case \(k=2\) and $n \geq 3$ follows from the Hanani--Tutte theorem by setting  \(C\) sufficiently large. Now assume that \eqref{fnk} holds for  \(n'<n\) or \(k'<k\).

Let \(G\) be an \(n\)-vertex topological graph with no \(k\) edges that
pairwise cross an odd number of times, and whose edges are \(t\)-intersecting.
After shortening each edge in a sufficiently small neighborhood of its
endpoints, we regard the edges as a family \(E\) of Jordan arcs in general
position. Clearly, this can be done without changing the proper crossings
between edges. We consider two cases.

\medskip

\noindent \emph{Case 1.} Suppose that
\[
\operatorname{odd\text{-}cr}(G)\le \frac{|E|^2}{\log^6 n}.
\]
By Lemma~\ref{lem:mat}, there is a constant $c_1 > 0$ such that
\[
b(G)\le c_1\log n\sqrt{\operatorname{odd\text{-}cr}(G)+\sum_{v\in V(G)}d(v)^2}.
\]
Since we always have
\[
\sum_{v\in V(G)}d(v)^2\le n\sum_{v\in V(G)}d(v)=2n|E|,
\]
it follows that
\[
b(G)\le c_1\log n\sqrt{\frac{|E|^2}{\log^6 n}+2n|E|}.
\]
If $|E|^2/\log^6 n<2n|E|$, then we have $|E|<2n\log^6 n$ and we are done since $C$ is large.   Hence we may assume that $|E|^2/\log^6 n \geq2n|E|$, which implies
 
\[
b(G)\le \sqrt{2}c_1\frac{|E|}{\log^2 n}.
\]

Let \(V(G)=V_1\cup V_2\) be a partition with at most \(b(G)\) edges between \(V_1\) and \(V_2\), where \(|V_1|,|V_2|\le 2n/3\). Since each induced subgraph \(G[V_i]\) also contains no \(k\) edges that pairwise cross an odd number of times, the induction hypothesis on \(n\) gives
\begin{align*}
|E|
    &\le f(|V_1|,k)+f(|V_2|,k)+b(G) \\
    &\le |V_1|(\log |V_1|)^{C\log k}
        + |V_2|(\log |V_2|)^{C\log k}
        + b(G) \\
    &\le |V_1|(\log (2n/3))^{C\log k}
        + |V_2|(\log (2n/3))^{C\log k}
        + b(G) \\
    &= n\bigl(\log n-\log(3/2)\bigr)^{C\log k}+b(G) \\
    &\le n(\log n)^{C\log k}-n(\log n)^{C\log k-1}+b(G) \\
    &\le n(\log n)^{C\log k}-n(\log n)^{C\log k-1}
        +\sqrt{2}c_1\frac{|E|}{\log^2 n}.
\end{align*}

\noindent Rearranging the last inequality gives
\[
\left(1-\frac{\sqrt{2}c_1}{\log^2 n}\right)|E|
\le n(\log n)^{C\log k}-n(\log n)^{C\log k-1}.
\]
For \(n_0\) sufficiently large,
\[
\left(1-\frac{\sqrt{2}c_1}{\log^2 n}\right)^{-1}
\le 1+\frac{4c_1}{\log^2 n}.
\]
Hence
\[
\begin{aligned}
|E|
&\le
\left(n(\log n)^{C\log k}-n(\log n)^{C\log k-1}\right)
\left(1+\frac{4c_1}{\log^2 n}\right)\\
&\le n(\log n)^{C\log k}
-n(\log n)^{C\log k-1}
+4c_1\,n(\log n)^{C\log k-2}\\
&\le n(\log n)^{C\log k},
\end{aligned}
\]
where the last inequality holds for \(n_0\) and $C$ sufficiently large.

\medskip
\noindent \emph{Case 2.} Suppose that
\[
\operatorname{odd\text{-}cr}(G)> \frac{|E|^2}{\log^6 n}.
\]

Let \(E=E_1\cup E_2\) be a uniformly random equipartition, so that
\[
|E_1|=\lfloor |E|/2\rfloor,\qquad |E_2|=\lceil |E|/2\rceil .
\]
For every odd-crossing pair of edges, the probability that the two edges lie
in different parts is at least \(1/2\). Hence the expected number of
odd-crossing pairs with one edge in \(E_1\) and the other in \(E_2\) is at least
\[
\frac{1}{2}\operatorname{odd\text{-}cr}(G)
> \frac{|E|^2}{2\log^6 n}.
\]
Therefore there is an equipartition \(E=E_1\cup E_2\) for which the number of
such pairs is at least \(|E|^2/(2\log^6 n)\). Since $|E_1||E_2|\le |E|^2/4$, the density of odd-crossing pairs between \(E_1\) and \(E_2\) is at least 
\[
\frac{|E|^2/(2\log^6 n)}{|E_1||E_2|}
\ge \frac{1}{\log^6 n}.
\]
By Theorem~\ref{thm:dense}, there is a constant \(c=c(t)\) and subsets
\[
E_1'\subseteq E_1,\qquad E_2'\subseteq E_2
\]
such that every edge in \(E_1'\) crosses every edge in \(E_2'\) an odd number
of times, and
\[
|E_i'|\ge \frac{|E_i|}{(\log^6 n)^c}
\qquad\text{for } i=1,2.
\]
In particular, since \(E_1,E_2\) are an equipartition,
\[
|E_1'|,|E_2'|\ge \frac{|E|}{3(\log^6 n)^c}.
\]

If both \(E_1'\) and \(E_2'\) contain \(\lceil k/2\rceil\) edges that pairwise
cross an odd number of times, then, together with the fact that every edge in
\(E_1'\) crosses every edge in \(E_2'\) an odd number of times, we obtain at
least \(k\) edges that pairwise cross an odd number of times, a contradiction.
Hence one of \(E_1'\) or \(E_2'\) contains no \(\lceil k/2\rceil\) edges that
pairwise cross an odd number of times. Without loss of generality, assume this
holds for \(E_1'\).

The topological graph with vertex set \(V(G)\) and edge set \(E_1'\) has
\(t\)-intersecting edge set and contains no \(\lceil k/2\rceil\) edges that
pairwise cross an odd number of times. By the induction hypothesis on \(k\),
\[
|E_1'|
\le f(n,\lceil k/2\rceil)
\le n(\log n)^{C\log \lceil k/2\rceil}.
\]
Therefore
\[
\frac{|E|}{3(\log^6 n)^c}
\le n(\log n)^{C\log \lceil k/2\rceil}.
\]
Since \(k\ge 3\), we have \(\lceil k/2\rceil\le 2k/3\). Hence
\[
\log \lceil k/2\rceil
\le \log k-\log(3/2),
\]
and so
\[
|E|
\le 3n(\log n)^{C\log k-C\log(3/2)+6c}.
\]
Choosing \(C\) sufficiently large compared to \(c\), and then choosing
\(n_0\) sufficiently large, we obtain
\[
|E|\le n(\log n)^{C\log k}.
\]

 \end{proof}

\section{Concluding remarks}

Although Theorem~\ref{mainbalance} is stated for \(t\)-intersecting curves,
its proof only uses that the total number of crossing points in the
arrangement is \(O(n^2)\). Following the proof of
Theorem~\ref{mainbalance} almost verbatim, we obtain the following.

 \begin{theorem}
For every constant \(C>0\), there exists a constant
\(\varepsilon=\varepsilon(C)>0\) with the following property. Let
\(\mathcal B\) be a set of \(n\) blue curves, and let \(\mathcal G\) be a set
of \(n\) green curves in the plane such that
\(\mathcal B\cup\mathcal G\) is in general position and determines at most
\(Cn^2\) crossing points. Then there exist subsets
\(\mathcal B'\subseteq\mathcal B\) and
\(\mathcal G'\subseteq\mathcal G\), with
\[
|\mathcal B'|,|\mathcal G'|\ge \varepsilon n,
\]
such that either every curve in \(\mathcal B'\) intersects every curve in
\(\mathcal G'\) an odd number of times, or every such pair intersects an even
number of times.
\end{theorem}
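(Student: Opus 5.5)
The plan is to rerun the proof of Theorem~\ref{mainbalance}, and with it the chain Theorem~\ref{thm:red-blue-region} $\Rightarrow$ Theorem~\ref{thm:doublegrounded} $\Rightarrow$ Theorem~\ref{mainbalance}, checking at each step where $t$-intersection is used. In each place I will replace it by the hypothesis that $\mathcal B\cup\mathcal G$ determines at most $Cn^2$ crossing points. The final parity step of Theorem~\ref{mainbalance} uses only two facts: that two closed curves in general position cross an even number of times, and the four-term claim derived from it. Neither uses any bound on crossings. So everything reduces to proving a version of Theorem~\ref{thm:doublegrounded} with $\varepsilon_1=\varepsilon_1(C)$.

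For Theorem~\ref{thm:red-blue-region}, the number $t$ enters only through the count $|V(G')|=O(t(m+n)^2)$ of vertices of the planarized, triangulated arrangement. This in turn gives $|\mathcal R|=O(t/\delta)$, and hence $|P\cap\Delta'|=\Omega(\delta|P|/t)$. I will restate the theorem with a parameter $X$ bounding the number of crossing points, so that $|V(G')|=O(X+m+n)$. Applying Theorem~\ref{thm:mixed} with $r=\delta(m+n)^2$ and $\rho=\sqrt\delta(m+n)$ then gives $|\mathcal R|=O\bigl((X+(m+n)^2)/(\delta(m+n)^2)+1/\sqrt\delta\bigr)$. When $X\le C'(m+n)^2$ this is $O(C'/\delta)$, and items 2--4 are unchanged. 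The rest of that proof (covering $P$, counting green curves through boundary vertices, and thickening to $\Delta'$) is purely planar and independent of $t$.

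In Theorem~\ref{thm:doublegrounded}, the main obstacle is to check that every subfamily to which the modified Theorem~\ref{thm:red-blue-region} is applied still has $O(n^2)$ crossings. There are two sources of trouble. First, the truncated families $\widetilde{\mathcal B}_1,\widetilde{\mathcal B}_2,\widetilde{\mathcal B}_3$ consist of subarcs of original blue curves. Their mutual crossings and their crossings with green curves are a subset of the original crossings, so they total at most $Cn^2$. Their cardinalities are $O(n)$ by item 3, since the boundaries are crossed $O(\sqrt{\delta}n)$ times. Second, the small perturbations and shortenings needed for general position must be done without creating new crossings, which is possible by taking them small enough. The original argument also silently uses $t$-intersection to say that the cut family is ``still $t$-intersecting''; here I replace that with ``still has at most $Cn^2$ crossings''. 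This keeps the total count $X\le C(m+n)^2$ at every application, so each produces constants $c_i=c_i(C)$. The choices $\delta_1,\delta_2$ sufficiently small then go through as before, with dependence on $C$ in place of $t$.

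Finally, with the double-grounded structure in hand, the proof of Theorem~\ref{mainbalance} proceeds verbatim. Join the first endpoints inside $\Delta_{b_1}$ (respectively $\Delta_{g_1}$) and the second endpoints inside $\Delta_{b_2}$ (respectively $\Delta_{g_2}$) to form closed curves $\beta''$ and $\gamma''$. This gives $\chi(\beta,\gamma)+\chi(\beta',\gamma)+\chi(\beta,\gamma')+\chi(\beta',\gamma')\equiv 0\pmod 2$. Hence the parity factors as $f_1(\beta)+f_2(\gamma)$, and pigeonholing over the four classes yields $\varepsilon=\Theta(\varepsilon_1(C))$.
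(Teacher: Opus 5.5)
Your proposal is correct and takes essentially the paper's own route. The paper simply remarks that the proof of Theorem~\ref{mainbalance} uses $t$-intersection only through the $O(n^2)$ bound on crossing points, namely the count $|V(G')|$ in Theorem~\ref{thm:red-blue-region}, which affects only item~1; you have traced this dependence through Theorems~\ref{thm:red-blue-region} and~\ref{thm:doublegrounded} in more detail than the paper does.

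One small correction. In the later applications the families have size only $\Omega(n)$; for example, $|\widetilde{\mathcal B}_1|+|\mathcal G^{(1)}|$ is only guaranteed to be at least $c_1n/48$. So what you actually get there is $X\le Cn^2\le C'(m+n)^2$, with $C'$ depending on $C$ and the previously fixed constants, not literally $X\le C(m+n)^2$. This is harmless: item~1 is the only place $X$ enters, and the choices of $\delta_1,\delta_2$ involve only absolute implicit constants, so there is no circularity.
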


As mentioned in the introduction, applying the weak bipartite regularity
lemma to Theorem~\ref{main} also gives the following density result for even
crossings.

\begin{theorem}\label{thm:dense-even}
For every integer \(t\ge 1\), there exists a constant \(c=c(t)>0\) with the
following property. Let \(\mathcal B\) be a set of blue curves, and let
\(\mathcal G\) be a set of green curves in the plane such that
\(\mathcal B\cup\mathcal G\) is a collection of \(t\)-intersecting curves in
general position. Suppose that at least
\(\delta|\mathcal B||\mathcal G|\) pairs
\((\beta,\gamma)\in\mathcal B\times\mathcal G\) intersect an even number of
times. Then there exist subsets
\(\mathcal B'\subseteq\mathcal B\) and
\(\mathcal G'\subseteq\mathcal G\), with
\[
|\mathcal B'|\ge \delta^c|\mathcal B|
\qquad\text{and}\qquad
|\mathcal G'|\ge \delta^c|\mathcal G|,
\]
such that every curve in \(\mathcal B'\) intersects every curve in
\(\mathcal G'\) an even number of times.
\end{theorem}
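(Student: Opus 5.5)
We derive Theorem~\ref{thm:dense-even} from Theorem~\ref{main} in the same way that Theorem~\ref{thm:dense} is derived for odd crossings. The bridge is the weak bipartite regularity lemma of Koml\'os and S\'os. Let \(H\) be the bipartite graph on \(\mathcal B\cup\mathcal G\) in which \(\beta\gamma\) is an edge when \(\beta\) and \(\gamma\) intersect an even number of times. By assumption \(H\) has density at least \(\delta\). The key structural input, which Theorem~\ref{main} supplies, is hereditary. Every pair of subfamilies \(\mathcal B_0\subseteq\mathcal B\), \(\mathcal G_0\subseteq\mathcal G\) still consists of \(t\)-intersecting curves in general position. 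Hence it contains \(\mathcal B_1\subseteq \mathcal B_0\) and \(\mathcal G_1\subseteq\mathcal G_0\) with \(|\mathcal B_1|\ge\varepsilon|\mathcal B_0|\) and \(|\mathcal G_1|\ge\varepsilon|\mathcal G_0|\), such that \((\mathcal B_1,\mathcal G_1)\) is complete or empty in \(H\).

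The steps are as follows. First, we show that \(H\) contains a pair \((\mathcal B_0,\mathcal G_0)\) with \(|\mathcal B_0|\ge \delta^{c_0}|\mathcal B|\) and \(|\mathcal G_0|\ge\delta^{c_0}|\mathcal G|\), on which the density of \(H\) is at least \(1-\varepsilon^2/2\). The standard route is density increment. Suppose the current pair \((X,Y)\) has density \(d\). Apply Theorem~\ref{main} to \((X,Y)\).

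If the resulting pair is complete in \(H\), we are done immediately. If it is empty, the density of \(H\) on the pair has dropped by a constant fraction on an \(\varepsilon\)-proportion. A Koml\'os--S\'os type averaging then finds a subpair of relative sizes \(\Omega_\varepsilon(1)\) on which the density rises to \(d(1+\Omega_\varepsilon(1))\). Concretely, we split \(X=X_1\cup X_2\) and \(Y=Y_1\cup Y_2\), where \(X_1,Y_1\) come from the empty pair, and observe that some of the remaining three blocks has density at least \(d(1+\Omega(\varepsilon^2))\).

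Each increment multiplies the density by \(1+\Omega(\varepsilon^2)\) and costs a constant factor in size. After \(O(\log(1/\delta))\) rounds, the density exceeds \(1-\varepsilon^2/2\); it cannot exceed \(1\). The sizes are therefore at least \(\varepsilon^{O(\log 1/\delta)}=\delta^{c_0}\), with \(c_0=c_0(t)\).

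Second, on a pair \((\mathcal B_0,\mathcal G_0)\) of density at least \(1-\varepsilon^2/2\), we apply Theorem~\ref{main} once more. It yields subfamilies of relative sizes at least \(\varepsilon\) that are complete or empty in \(H\). An empty pair has \(\varepsilon^2|\mathcal B_0||\mathcal G_0|\) non-edges, which is more than \(H\) has on this pair, so the empty case is impossible. Hence the pair is complete in \(H\): every curve in it intersects every other an even number of times. Its sizes are at least \(\varepsilon\delta^{c_0}\ge\delta^{c}\) times the original sizes, for a suitable \(c=c(t)\), assuming \(\delta<1/2\). For \(\delta\) close to \(1\), the second step can be applied directly to \((\mathcal B,\mathcal G)\).

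The main obstacle is the density-increment step. The empty pair produced by Theorem~\ref{main} might carry only a tiny fraction of the total weight of \(H\) when \(d\) is small, and then the increment may be too weak. The cleanest fix is to invoke the Koml\'os--S\'os lemma in its stated form, \cite[Theorem~9.4.1]{Matousek}. That lemma finds a subpair of linear relative size with density close to \(1\), or a density increment, with polynomial dependence on \(\delta\) whenever the bipartite graph class has the complete-or-empty hereditary property, which Theorem~\ref{main} provides. The argument is literally the one for Theorem~\ref{thm:dense}, with ``odd'' replaced by ``even''. The symmetry of Theorem~\ref{main} between the two parities is exactly what makes the even case go through.
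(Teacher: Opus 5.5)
Your overall route is the paper's route: apply the Koml\'os--S\'os weak regularity lemma to the bipartite even-crossing graph \(H\), and combine it with Theorem~\ref{main}, whose conclusion is symmetric in the two parities. The paper gives no more than that one-line justification. As written, however, your argument has a gap, and both your diagnosis of the difficulty and your proposed fix are off.

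\textbf{The increment step fails as stated.} Theorem~\ref{main} only guarantees \(|X_1|\ge\varepsilon|X|\) and \(|Y_1|\ge\varepsilon|Y|\). So \(X_1\) may be all of \(X\), and \(Y_2\) an arbitrarily small part of \(Y\). Then the only block of increased density can have a negligible side, and ``each round costs a constant factor in size'' is unjustified.

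\textbf{The obstacle you name is not real.} The empty pair contains no edges of \(H\) and covers at least an \(\varepsilon^2\)-fraction of \(X\times Y\). Hence the other blocks have average density at least \(d/(1-\varepsilon^2)\), a multiplicative gain independent of \(d\).

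\textbf{The repair.} Trim \(X_1,Y_1\) to sizes \(\lceil\varepsilon|X|\rceil\) and \(\lceil\varepsilon|Y|\rceil\); subfamilies of an empty pair are empty, and you may take \(\varepsilon\le 1/3\). Then all four blocks have sides of relative size at least \(\varepsilon\), up to rounding. After at most \(\ln(1/\delta)/\ln(1+\varepsilon^2)\) rounds, Theorem~\ref{main} must return an all-even pair, of relative sizes \(\delta^{O_\varepsilon(1)}\). Repaired this way, your argument is a self-contained proof that avoids the lemma entirely; it is essentially the lemma's own proof, simplified because the bad pair is empty rather than merely sparse.

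\textbf{Your fallback misstates the lemma.} Koml\'os--S\'os has no hereditary hypothesis and does not produce density close to \(1\). For any bipartite graph of density \(\delta\), it gives sets \(Y_1\subseteq\mathcal B\), \(Y_2\subseteq\mathcal G\) of relative sizes \(\delta^{O_\varepsilon(1)}\) such that every \(Z_1\subseteq Y_1\), \(Z_2\subseteq Y_2\) with \(|Z_i|\ge\varepsilon|Y_i|\) spans an edge of \(H\). Apply Theorem~\ref{main} inside \((Y_1,Y_2)\): weak regularity excludes the all-odd (empty) outcome, so you get an all-even pair. That is the whole proof, and no stage reaching density \(1-\varepsilon^2/2\) is needed.
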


Let us remark that Theorem~\ref{thm:mixed} can be extended to also bound $|V(R)\cap Q|=O(\rho)$ and $|b(R)\cap Q|=O(\sqrt{\rho})$ while leaving everything else unchanged. One can also simultaneously control multiple prescribed vertex sets and weight functions for each region in the final division. However, this requires a more involved argument, which will appear in a forthcoming paper.

\end{document}